\documentclass[reqno]{amsart}

\usepackage[sorting=none, sorting=nyt, maxbibnames=99, backend=biber]{biblatex}

\renewbibmacro{in:}{}
\bibliography{references.bib}
\usepackage{amssymb}
\usepackage{calrsfs}
\usepackage{graphics,graphicx,mathrsfs}
\usepackage{enumerate}
\usepackage{enumitem}
\usepackage{textcomp}
\usepackage{bbold}
\usepackage{url}
\usepackage{xcolor}
\definecolor{vio}{rgb}{0.54, 0.17, 0.89}
\usepackage[ruled, lined, linesnumbered, longend]{algorithm2e}
\usepackage{hyperref}
\usepackage{titlesec}

\newtheorem{theorem}{Theorem}[section]
\newtheorem{lemma}[theorem]{Lemma}
\newtheorem{question}[theorem]{Question}
\newtheorem{proposition}[theorem]{Proposition}
\newtheorem{conjecture}[theorem]{Conjecture}

\numberwithin{equation}{section}

\theoremstyle{remark}

\titleformat{\section}
  {\normalfont\large\bfseries\centering}{\thesection}{1em}{}

\titleformat{\subsection}
  {\normalfont\bfseries}{\thesubsection}{1em}{}



\def\reals{\hbox{\rm I\kern-.18em R}}
\def\complexes{\hbox{\rm C\kern-.43em
\vrule depth 0ex height 1.4ex width .05em\kern.41em}}
\def\field{\hbox{\rm I\kern-.18em F}} 

\let\svthefootnote\thefootnote
\newcommand\freefootnote[1]{%
  \let\thefootnote\relax%
  \footnotetext{#1}%
  \let\thefootnote\svthefootnote%
}

\newenvironment{section*}[2][A]{
  \section*{#2}
  \renewcommand\thesection{#1}
  \setcounter{theorem}{0}}{}

\newcommand{\rev}[1]{\overleftarrow{#1}}

\allowdisplaybreaks

\begin{document}


\title[Power-free palindromes and reversed primes]{Power-free palindromes and reversed primes}

\author{Shashi Chourasiya and Daniel R. Johnston}
\address{School of Science, UNSW Canberra, Australia}
\email{s.chourasiya@unsw.edu.au}
\address{School of Science, UNSW Canberra, Australia}
\email{daniel.johnston@unsw.edu.au}
\date\today

\begin{abstract}
    We prove new results related to the digital reverse $\rev{n}$ of a positive integer $n$ in a fixed base $b$. First we show that for $b\geq 26000$, there exists infinitely many primes $p$ such that $\rev{p}$ is square-free. Further, we show that for $b\geq 2$ there are infinitely many palindromes (with $n=\rev{n}$) that are cube-free. We also give asymptotic expressions for the counting functions corresponding to these results. The main tools we use are recent bounds from the literature on reversed primes and palindromes in arithmetic progressions.
\end{abstract}

\maketitle

\freefootnote{\textit{Corresponding author}: Daniel Johnston (daniel.johnston@unsw.edu.au).}
\freefootnote{\textit{Affiliation}: School of Science, The University of New South Wales Canberra, Australia.}
\freefootnote{\textit{Key phrases}: digital reverse, palindromes, reversible primes, power-free}
\freefootnote{\textit{2020 Mathematics Subject Classification}: 11A63 (Primary) 11N37, 11Y35 (Secondary)}

\section{Introduction}
\subsection{Outline of paper}
In this paper, we are interested in results concerning the digital reverse of a number. To make this notion precise, we fix a base $b\geq 2$ and set
\begin{equation}\label{bNdef}
    \mathcal{B}_N=\{b^{N-1}\leq n< b^{N}: b\nmid n\}
\end{equation}
to be the set of $N$-digit numbers which are not divisible by $b$. The \emph{digital reverse} of an integer $n\in\mathcal{B}_N$ with base-$b$ expansion
\begin{equation*}
    n=\sum_{0\leq i<N}n_ib^{i}
\end{equation*}
is then defined by
\begin{equation}\label{revdef}
    \rev{n}=\sum_{0\leq i<N}n_i b^{N-1-i}.
\end{equation}
So for example, in base 10,
\begin{equation*}
    \rev{1234}=4321\quad\text{and}\quad\rev{878787}=787878.
\end{equation*}
We note that the condition that $b\nmid n$ in the definition \eqref{bNdef} of $\mathcal{B}_N$ is so that the last digit $n_0$ of $n$ is non-zero, and the operator $\rev{\cdot}$ is an involution on $\mathcal{B}_N$.

Throughout we also work with the more restrictive set 
\begin{equation*}
    \mathcal{B}_N^*=\{b^{N-1}\leq n<b^N:(n,b^3-b)=1\}.
\end{equation*}
By only considering $\rev{n}$ coprime to $b^3-b=b(b^2-1)$ we avoid several arithmetical relations between $n$ and $\rev{n}$ which would otherwise complicate our results and proofs. Here, the modulus $b$ is important since for any $m\leq N$, the residue of $\rev{n}\pmod{b^m}$ is determined by the first $m$ digits of $n$. Then, in terms of the modulus $b^2-1$,
\begin{equation*}
    \rev{n}\equiv b^{N-1}n\pmod{b^2-1}
\end{equation*}
since $b\equiv b^{-1}\pmod{b^2-1}$. In particular,
\begin{equation}\label{b2m1iff}
    (n,b^2-1)>1\ \text{if and only if}\ (\rev{n},b^2-1)>1.
\end{equation}

Historically, studies of the digital properties of numbers have been confined to the realm of elementary number theory. However, in recent years there has been a flurry of results obtained by applying deep analytical methods. Of particular note is Maynard's work \cite{maynard2019primes} in 2019, which showed that there are infinitely many primes missing a fixed digit in base 10. Other significant works include \cite{mauduit2010probleme} and \cite{swaenepoel2020prime}.

With regard to the digital reverse of numbers, we have the following two long-standing conjectures.

\begin{conjecture}\label{revcon}
    For every base $b\geq 2$, there are infinitely many reversible primes\footnote{In some literature, a reversible prime is called an \emph{emirp}. That is, ``prime" spelt backwards.}. That is, prime numbers $p$ such that $\rev{p}$ is also prime.
\end{conjecture}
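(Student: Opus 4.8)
The plan is to study the counting function
\[
R_b(N)=\#\{n\in\mathcal{B}_N : n\ \text{and}\ \rev{n}\ \text{are both prime}\}
\]
and to show that $R_b(N)\to\infty$ as $N\to\infty$. A Cram\'er-type heuristic suggests an asymptotic of the shape $R_b(N)\asymp\mathfrak{S}(b)\,b^{N}/(N\log b)^{2}$ for a suitable singular series $\mathfrak{S}(b)$: among the $\asymp b^{N}$ integers in $\mathcal{B}_N$, each of $n$ and $\rev{n}$ is prime with ``probability'' $\asymp 1/(N\log b)$, and if these two events behaved independently the count would be of this order, which already tends to infinity with $N$. The entire difficulty lies in the fact that $n\mapsto\rev{n}$ is a rigid digital operation carrying no additive or multiplicative structure, so the two primality conditions cannot be decoupled; correspondingly, I should stress at the outset that this is a notorious open problem rather than a statement amenable to the elementary methods of the surrounding paper.

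There are two natural lines of attack. The first is the circle method: writing $\Lambda$ for the von Mangoldt function, one would try to evaluate
\[
\sum_{n\in\mathcal{B}_N}\Lambda(n)\,\Lambda(\rev{n})
\]
by decomposing each factor via a Vaughan or Heath--Brown identity into Type~I and Type~II pieces, thereby reducing the problem to bilinear sums weighted by the reversal $\rev{n}$. The essential analytic input would be a Fourier-analytic understanding of the digit-reversal map, in the spirit of the exponential-sum estimates of Maynard \cite{maynard2019primes} for primes with restricted digits. The second approach is sieve-theoretic: starting from the equidistribution results for reversed primes in arithmetic progressions that the present paper already exploits, one would run a weighted (Selberg or $\beta$-sieve) argument on the sequence $\{\rev{p}:p\ \text{prime}\}$ in order to control the prime factors of $\rev{p}$.

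I expect the sieve approach to deliver at best an almost-prime statement — for instance, that $\rev{p}$ has a bounded number of prime factors for infinitely many primes $p$, strengthening the square-free result of the paper — but \emph{not} the full conjecture. The main obstacle is the \emph{parity problem}: a pure sieve cannot separate integers with an even number of prime factors from those with an odd number, and so cannot isolate genuine primes without a supplementary Type~II bilinear cancellation estimate. Securing such an estimate for the reversed sequence is precisely where the absence of structure in $\rev{\cdot}$ becomes fatal, since the usual bilinear manipulations rest on a multiplicativity that reversal destroys. For this reason the conjecture appears to lie beyond current technology, being of a difficulty comparable to the twin-prime and binary Goldbach problems; a complete proof would require a genuinely new mechanism for injecting bilinear information about digit reversal into the analysis.
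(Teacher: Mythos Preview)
The statement you were asked to address is labelled \emph{Conjecture} in the paper, and the paper explicitly says that it ``appear[s] out of reach''; there is accordingly no proof in the paper to compare your attempt against. Your write-up correctly recognises this: you give a heuristic for the expected order of $R_b(N)$, sketch the two standard avenues (circle method with bilinear input, and sieving on the sequence $\{\rev{p}\}$), and identify the parity problem as the essential obstruction to promoting an almost-prime result to a genuine prime result. That diagnosis matches the paper's own stance, which is why the paper retreats to the weaker Conjecture~\ref{revcon2} (square-free $\rev{p}$) and proves Theorem~\ref{revthm} instead.

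One small clarification is in order. You write that the sieve approach ``would run a weighted \ldots\ argument on the sequence $\{\rev{p}:p\ \text{prime}\}$'' starting from the equidistribution results the paper exploits. This is exactly the content of Question~\ref{revq} in the paper, and the paper points out that Telhcirid's theorem as stated is a \emph{pointwise} estimate for $\rev{\pi}_N(a,d)$, not the Bombieri--Vinogradov-type average a sieve would need; so even the almost-prime statement is not immediate from the tools assembled here. (As the paper notes in \S\ref{recentworksect}, a subsequent preprint of Dartyge, Rivat and Swaenepoel does supply this.) None of this is a gap in your reasoning --- you were not claiming a proof --- but it sharpens where the first genuine obstacle lies even before one hits parity.
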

\begin{conjecture}\label{palincon}
    For every base $b\geq 2$, there are infinitely many palindromic primes. That is, prime numbers $p$ such that $p=\rev{p}$.
\end{conjecture}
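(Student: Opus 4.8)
The plan is to follow the probabilistic philosophy that guides every serious attack on this problem, and then to confront honestly the analytic barrier that keeps it open. The natural first step is to record the heuristic. An $N$-digit palindrome in base $b$ is determined by its first $\lceil N/2\rceil$ digits (with a nonzero leading digit), so the number of palindromes below $x=b^N$ is $\asymp (b-1)\,b^{\lceil N/2\rceil-1}\asymp\sqrt{x}$. Modelling a palindrome $P$ of size $\asymp x$ as prime with probability $\sim 1/\log P$, after correcting for the forced local factors (for instance, every even-length base-$b$ palindrome satisfies $P\equiv\sum_i a_i(-1)^i\equiv 0\pmod{b+1}$, so one restricts to odd length), one is led to expect $\gg \sqrt{x}/\log x$ palindromic primes up to $x$. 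This quantity diverges as $x\to\infty$, which is exactly the heuristic support for Conjecture~\ref{palincon}.

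The second step is to try to convert this heuristic into a theorem by detecting primes inside the sparse set $\PP$ of palindromes. The framework to imitate is the one used later in this paper and in the work on digitally restricted primes: first establish that palindromes are equidistributed in arithmetic progressions up to a suitable level of distribution, then feed this information into a sieve. Concretely, I would set up a weighted (Harman-type) sieve supported on $\PP$, split the resulting count into \emph{type I} sums, which reduce to the distribution of palindromes in residue classes and for which the equidistribution bounds quoted in the sequel are strong enough, and \emph{type II} bilinear sums $\sum_{m}\sum_{n}a_m b_n\,\mathbf{1}_{\PP}(mn)$, and hope that the latter exhibit enough cancellation to cross the threshold producing a positive lower bound for $\#\{p\le x:\ p\in\PP\}$.

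The third step is where the argument stalls, and it is the heart of the matter. The decisive quantity is the density of $\PP$: with only $\asymp\sqrt{x}$ palindromes below $x$, the set is dramatically sparser than, say, Maynard's integers missing a fixed digit, whose density $\asymp x^{\log_b(b-1)}$ sits just barely within reach of current technology. At density $x^{1/2}$ the exponential sums attached to $\PP$ do not decay fast enough to control the type~II contribution, so the sieve runs into the parity barrier: no elementary sieve identity separates palindromes with an even number of prime factors from those with an odd number, and the bilinear input needed to break this is unavailable. In effect, a proof of Conjecture~\ref{palincon} would require detecting primes in a set as thin as the values of an irreducible quadratic, and indeed $n^2+1\le x$ runs through $\asymp\sqrt{x}$ values just as the palindromes do; showing that infinitely many such values are prime is itself famously open. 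This is precisely why I would not expect to close the conjecture with present methods, and why the results actually proved below replace primality by the softer condition of power-freeness, for which the equidistribution of palindromes in arithmetic progressions is by itself sufficient input.
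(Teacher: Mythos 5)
You were asked about Conjecture \ref{palincon}, and the first thing to say is that this statement is one of the paper's motivating \emph{open problems}, not one of its results: the authors state explicitly that Conjectures \ref{revcon} and \ref{palincon} ``appear out of reach,'' and the paper contains no proof of it. Your submission is correctly calibrated on this point --- you do not claim a proof, so there is no paper proof to compare against, and the only ``gap'' in your argument is the open problem itself, which you concede honestly. The supporting details you give are accurate and consistent with the paper: the count of palindromes below $x$ is indeed $\asymp_b \sqrt{x}$ (this is \eqref{Pbasym} and Lemma \ref{pbstarlem}); your local obstruction is right, since $b\equiv -1 \pmod{b+1}$ forces every even-length base-$b$ palindrome to be divisible by $b+1$, which is also part of why the paper works throughout with $\mathcal{P}_b^*(x)$, imposing $(n,b^3-b)=1$ to strip out the moduli $b$ and $b^2-1=(b-1)(b+1)$; and your diagnosis of the barrier --- a set of density $x^{-1/2}$, far sparser than Maynard's digit-restricted integers, with no usable bilinear (type II) estimates and hence no way past parity, morally on par with primes of the form $n^2+1$ --- matches the paper's remark that the problem is ``deceptively difficult due to the sparseness of palindromes.''

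Your closing observation is exactly the strategy the paper adopts: since primality is inaccessible, Theorem \ref{palinthm} replaces it by cube-freeness, which requires only the \emph{type I} input, namely the Tuxanidy--Panario equidistribution of palindromes in progressions to level $x^{1/5-\varepsilon}$ (Lemma \ref{equilem}) combined with the Banks--Shparlinski Brun--Titchmarsh bound (Lemma \ref{brunlem}); the strongest known approximation to palindromic primality, Lemma \ref{tuxthm}, yields palindromes with $\Omega(n)\leq 6$ via a sieve that likewise stops well short of primes. One quantitative caution about your proposed Harman-type sieve: even with a \emph{perfect} level of distribution for the type I sums, a sieve on a set of density $x^{-1/2}$ cannot by itself produce primes because of the parity obstruction, so the decisive missing ingredient is, as you say, cancellation in sums of the shape $\sum_m\sum_n a_m b_n\,\mathbb{1}_{\mathcal{P}_b}(mn)$, for which no nontrivial estimates currently exist; any genuine progress on Conjecture \ref{palincon} would have to begin there rather than with sharper equidistribution.
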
 
Currently, both of these conjectures appear out of reach, particularly Conjecture \ref{palincon} which would imply Conjecture \ref{revcon}. From here onwards we focus on the following weakenings of Conjecture \ref{revcon} and Conjecture \ref{palincon}, which are more approachable with current methods.

\begin{conjecture}\label{revcon2}
    For every base $b\geq 2$, there are infinitely many primes $p$ such that $\rev{p}$ is square-free.
\end{conjecture}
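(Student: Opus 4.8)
The plan is to detect square-free values of $\rev{p}$ with a Möbius sieve and to feed in the cited equidistribution estimates for reversed primes in arithmetic progressions, carrying the argument out uniformly for every base $b\geq 2$. Using $\mu^2(m)=\sum_{d^2\mid m}\mu(d)$ as the indicator of the square-free integers, the quantity to estimate is
\[
S(N)=\sum_{\substack{p\in\mathcal{B}_N^*\\ p\ \text{prime}}}\mu^2(\rev{p})=\sum_{d\geq 1}\mu(d)\,\pi_b(N;d^2),\qquad \pi_b(N;q):=\#\{p\in\mathcal{B}_N^*\ \text{prime}:\ q\mid\rev{p}\}.
\]
Showing $S(N)\to\infty$ (indeed extracting an asymptotic for it) for all $b\geq 2$ is exactly the assertion that there are infinitely many primes $p$ with $\rev{p}$ square-free.

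First I would split the $d$-sum at a threshold $D=(b^N)^{\theta}$, where $\theta>0$ is the level of distribution provided by the reversed-prime estimates. For $d\leq D$ I would insert an equidistribution asymptotic $\pi_b(N;d^2)=\rho(d^2)\,\pi_b(N)+E(N,d^2)$, where $\pi_b(N)=\#\{p\in\mathcal{B}_N^*\ \text{prime}\}$ and $\rho$ is the multiplicative local density of the congruence $d^2\mid\rev{p}$. The local factors fall into three regimes that must be computed separately. For a prime $q\mid b$, the condition $q^2\mid\rev{p}$ reads $n_{N-1}+n_{N-2}b\equiv 0\pmod{q^2}$, a restriction on the leading two digits of $p$. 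For $q\mid b^2-1$, the symmetry \eqref{b2m1iff} gives $(\rev{p},b^2-1)=1$ on $\mathcal{B}_N^*$, so $\rho(q^2)=0$ and such primes drop out of the sieve. For $q\nmid b(b^2-1)$, equidistribution of $\rev{p}\bmod q^2$ yields the usual $\rho(q^2)=q^{-2}$. Summing $\sum_d\mu(d)\rho(d^2)$ then gives a positive constant $C_b$, a $6/\pi^2$-type Euler product corrected by these local factors, and a main term $C_b\,\pi_b(N)$.

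For the tail $d>D$ I would use the crude bound $\pi_b(N;d^2)\ll b^N/d^2$, valid because $\rev{\cdot}$ is an involution on $\mathcal{B}_N$ so that $q\mid\rev{p}$ holds for at most $O(b^N/q)$ integers of $\mathcal{B}_N$; hence $\sum_{d>D}\pi_b(N;d^2)\ll b^N/D=o(\pi_b(N))$ for any fixed $\theta>0$. It then remains to bound the accumulated error $\sum_{d\leq D}|E(N,d^2)|$, which I would handle via the Bombieri--Vinogradov-type input for reversed primes, choosing $D$ as large as the level $\theta$ allows while keeping this sum below $C_b\,\pi_b(N)$.

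I expect this last error estimate to be the main obstacle, and it is where uniformity over small bases must be earned. The cited equidistribution bounds for $\rev{p}$ in arithmetic progressions carry explicit dependence on $b$, and the inequality forcing the summed error to lose against the main constant $C_b$ is tightest precisely when $b$ is small. To close the argument for every $b\geq 2$, rather than invoking these estimates off the shelf I would track the constants in the underlying digit-reversal exponential sums (of van der Corput or circle-method type) directly, pushing the admissible level of distribution $\theta$ as high as possible. I would also lean on the structural saving already built into $\mathcal{B}_N^*$: by \eqref{b2m1iff} the coprimality $(n,b^2-1)=1$ removes the entire family of primes dividing $b^2-1$ from the sieve, shrinking both $C_b$ and the error that must be controlled. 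Combining a sharpened, base-uniform level of distribution with this reduction of the sieve support is the crux on which the statement for all $b\geq 2$ rests.
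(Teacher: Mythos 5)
Your sieve skeleton is essentially the one used in the paper's proof of Theorem \ref{revthm}: write $\mu^2(\rev{p})=\sum_{d^2\mid\rev{p}}\mu(d)$, split at a threshold, feed the small moduli into an asymptotic for reversed primes in progressions, and bound the tail trivially by $\ll b^N/d^2$. (Two side remarks: the Bombieri--Vinogradov framing is more than is needed --- since the tail bound already wins for $d>N^2$, a pointwise Siegel--Walfisz-quality estimate such as Lemma \ref{telhcirid2lem}, valid for $d\le\exp(c\sqrt{N})$ with $(d,b^3-b)=1$, suffices, and no positive level of distribution $\theta$ is required; also, the paper sidesteps your local-factor analysis at $q\mid b$ entirely by counting $\rev{p}\in\mathcal{B}_N^*$, so that all moduli sharing a factor with $b^3-b$ are excluded from the sieve and the Euler product is simply $\zeta(k)^{-1}\prod_{p\mid b^3-b}(1-p^{-k})^{-1}$.)

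The genuine gap is your final claim that uniformity over all bases $b\ge 2$ can be ``earned'' by tracking constants in the digit-reversal exponential sums. The statement you are proving is a conjecture precisely because the paper cannot do this: the method of \cite{bhowmik2024telhcirid} requires condition \eqref{alphabeq}, namely $C_b<b^{1/5}$ where $C_b b$ uniformly bounds $f(\theta)=\sum_{0\le h<b}\min\bigl(b,\left|\sin\pi(h/b+\theta)\right|^{-1}\bigr)$. Taking $\theta$ near $0$ shows $f$ genuinely attains values of order $b\log b$, so $C_b\gg\log b$ and the inequality $C_b<b^{1/5}$ is a structural barrier, not a matter of sharpening constants; the appendix's exact computation of $\max_\theta f(\theta)$ already extracts essentially everything available, and the paper states explicitly that this route limits out around $b=25960$. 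Consequently the paper proves the conjecture only for $b\ge 26000$ (Theorem \ref{revthm}), and the extension to all $b\ge 2$ required genuinely new analytic input in the subsequent Bhowmik--Suzuki preprint \cite{bhowmik2025zsiflaw} discussed in Section \ref{recentworksect} --- not a re-derivation of the same exponential-sum bound with better constants. As written, your argument establishes the result only in the range where condition \eqref{alphabeq} holds, and the crux you identify (small $b$) is exactly the part your plan does not close.
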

\begin{conjecture}\label{palincon2}
    For every base $b\geq 2$, there are infinitely many square-free palindromes. That is, integers $n>0$ such that $n$ is square-free and $n=\rev{n}$. 
\end{conjecture}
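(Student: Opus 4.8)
The plan is to detect square-freeness by the usual Möbius convolution and to run a sieve over the thin set of palindromes. Writing $P(x)$ for the number of palindromes $n\le x$ and $\Pi(x;q,a)$ for the number of those with $n\equiv a\pmod q$, the identity $\mu^2(n)=\sum_{d^2\mid n}\mu(d)$ gives
\begin{equation*}
\sum_{\substack{n\le x\\ n=\rev{n}}}\mu^2(n)=\sum_{d\ge 1}\mu(d)\,\Pi(x;d^2,0).
\end{equation*}
I would split this sum at a parameter $D=x^{\delta}$, treating $d\le D$ as the main term and $d>D$ as a tail to be discarded. For the main term I would invoke the equidistribution of palindromes in arithmetic progressions quoted from the literature, in the form $\Pi(x;d^2,0)=\kappa(d^2)\,P(x)+E(x;d^2)$, where $\kappa(q)$ denotes the local density of palindromes divisible by $q$. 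For $d$ coprime to $b$ one expects $\kappa(d^2)$ to agree with $1/d^2$ up to a bounded multiplicative factor coming from the digit-reversal constraint and \eqref{b2m1iff}, so that $\sum_{d}\mu(d)\kappa(d^2)=\prod_p(1-\kappa(p^2))$ converges to a positive constant $C_b>0$. Here I would check that there is no local obstruction, that is, that for no prime $p$ are all sufficiently long palindromes divisible by $p^2$. Summing the main terms over $d\le D$ would then yield $C_b\,P(x)(1+o(1))\asymp x^{1/2}$, exactly the conjectured order of magnitude, with the error $\sum_{d\le D}|E(x;d^2)|$ under control as long as the cited equidistribution holds with level of distribution $D^2$.

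The hard part — and the reason the statement is recorded only as a conjecture — is the tail $\sum_{d>D}\Pi(x;d^2,0)$, which must be shown to be $o(x^{1/2})$. Heuristically this holds with room to spare: among all integers up to $x$ the number divisible by some square $d^2>x^{1/4}$ is $\ll x^{3/4}$, and since palindromes have density $x^{-1/2}$ one expects only $\approx x^{1/4}=o(x^{1/2})$ palindromes carrying a large square factor. The obstruction is that this heuristic presumes palindromes are uncorrelated with divisibility by large squares, i.e.\ equidistribution in arithmetic progressions to moduli as large as $x^{1/2+\varepsilon}$, whereas the trivial divisor bound only gives $\sum_{d>D}\Pi(x;d^2,0)\ll x^{1/2+o(1)}$, matching the main term and hence useless. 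Pushing the level of distribution for palindromes past the critical exponent $1/2$ appears beyond current technology, precisely because there are only $\asymp x^{1/2}$ palindromes to begin with, so the desired level sits at the very edge of what any set of this size can support.

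To make genuine progress I would abandon black-box equidistribution for large moduli and work directly with the digit structure. A length-$N$ palindrome is the value $f(\mathbf{a})=\sum_{i}a_i\paranthesis{b^i+b^{N-1-i}}$ of a fixed linear form in the free digits $\mathbf{a}\in\{0,\dots,b-1\}^{\lceil N/2\rceil}$, so $\Pi(x;d^2,0)$ counts the zeros of $f(\mathbf{a})\bmod d^2$ inside the digit box. I would try to estimate these counts, and in particular the large-$d$ tail, via exponential sums over the digit variables — a circle-method treatment of $f\bmod d^2$ — hoping that the oscillation of $b^{N-1-i}\bmod d^2$ across $i$ produces the cancellation that the divisor bound misses. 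Even a saving of $x^{-\varepsilon}$ over the trivial tail bound, uniform in $d$, would suffice to close the argument and establish the conjecture; securing that saving is the crux, and I expect it to be the main technical battle.
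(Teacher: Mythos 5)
You were asked to prove a statement that the paper itself records only as an open conjecture (Conjecture \ref{palincon2}): the authors prove just the cube-free analogue (Theorem \ref{palinthm}) and explicitly leave the square-free case unresolved. Measured against that, your attempt is correctly calibrated and follows essentially the paper's own partial-progress framework: the M\"obius convolution $\mu^2(n)=\sum_{d^2\mid n}\mu(d)$, a main term controlled by the Tuxanidy--Panario equidistribution estimate for moduli up to $x^{1/5-\varepsilon}$ (the paper's Lemma \ref{equilem}, applied to the restricted set $\mathcal{P}_b^*(x)$ so that the local factors are exactly $1/d^2$ for $(d,b^3-b)=1$), and a tail over large $d$ that cannot currently be beaten below the $\asymp\sqrt{x}$ main term. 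Your diagnosis of the bottleneck is the paper's: with the Banks--Shparlinski bound $\sum_{n\in\mathcal{P}_b^*(x),\,d^k\mid n}1\ll_b|\mathcal{P}_b^*(x)|/d^{k/2}$ (Lemma \ref{brunlem}), the tail converges for $k\geq 3$ --- running your own machinery with cubes in place of squares literally recovers Theorem \ref{palinthm} --- while for $k=2$ it gives only $\sqrt{x}\log x$, matching the main term; the paper says its method ``falls just short'' in exactly this sense. So there is a genuine gap in your argument (the tail bound), you openly acknowledge it, and it is the same gap the paper acknowledges.

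Two points of comparison where you drift from the paper's assessment. First, you overstate what is missing: you frame the obstruction as equidistribution to moduli of size $x^{1/2+\varepsilon}$, at the edge of what a set of $\asymp x^{1/2}$ elements can support. The paper's Section \ref{discsect} observes that far less would do: a one-sided Brun--Titchmarsh-type improvement $\sum_{n\in\mathcal{P}_b^*(x),\,d\mid n}1\ll_b|\mathcal{P}_b^*(x)|/d^{1/2+\delta}$ for any fixed $\delta>0$ --- an upper bound with a slightly better power saving in $d$, not an asymptotic for large moduli --- or alternatively $\gg_b|\mathcal{P}_b(x)|/\log x$ palindromes with $\Omega(n)\leq 2$, since only $O(|\mathcal{P}_b(x)|^{1/2})$ palindromes are perfect squares. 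Second, your proposed escape route (exponential sums over the free digit variables of the palindromic linear form) is essentially the method by which the cited bounds of Banks--Shparlinski and Tuxanidy--Panario are already proved, so it is the right arena but does not yet contain a new idea. A minor slip in your heuristic aside: the integers $n\leq x$ divisible by some $d^2>x^{1/4}$ number $\ll x^{7/8}$, not $x^{3/4}$ (the latter corresponds to $d>x^{1/4}$); this does not affect your argument since the passage is purely illustrative.
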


Conjectures \ref{revcon2} and \ref{palincon2} simply weaken the primality conditions in Conjectures \ref{revcon} and \ref{palincon} to the property of being square-free. We note that Conjecture \ref{palincon2} is deceptively difficult due to the sparseness of palindromes, and has been mentioned several times in the literature (see e.g.\ \cite[p.\ 10]{banks2005prime} and \cite[p.\ 7]{dartyge2024reversible}).

Currently the best result towards Conjecture \ref{revcon2} appears in the recent work of Dartyge et\ al. \cite{dartyge2024reversible}. Here, the authors show that there are infinitely many numbers $n$ such that both $n$ and $\rev{n}$ are square-free \cite[Theorem 1.4]{dartyge2024reversible}. Their result is only proven in base $b=2$, although their techniques appear to routinely generalise to other bases. In this paper we qualitatively improve upon this result for large $b$. In particular, we are able to prove Conjecture \ref{revcon2} provided $b\geq 26000$.

\begin{theorem}\label{revthm}
    Let $b\geq 26000$. Then, there are infinitely many primes $p$ such that $\rev{p}$ is square-free. More generally, for $k\geq 2$, if 
    \begin{equation*}
        r_{b,k}(N)=\#\{\rev{p}\in\mathcal{B}^*_N:\rev{p}\ k\text{th}\ \text{power-free}\},
    \end{equation*}
    then there exists an effectively computable constant $c_1(b)>0$ such that
    \begin{equation}\label{rbeq}
        r_{b,k}(N)=\frac{1}{\zeta(k)}\prod_{p\mid b^3-b}\left(1-\frac{1}{p^k}\right)^{-1}\frac{\varphi(b)}{b}\frac{b^N}{\log b^N}\left(1+O_{b}\left(\frac{1}{N}\right)\right),
    \end{equation}
    where $\zeta$ is the Riemann zeta-function and $\varphi$ is the Euler totient function.
\end{theorem}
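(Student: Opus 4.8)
The plan is to detect the $k$th power-free condition with the Möbius function and reduce everything to counting reversed primes in arithmetic progressions, for which I would invoke the equidistribution estimates from the literature. Since $\rev{\cdot}$ is an involution on $\mathcal{B}_N$ and we only count reversals coprime to $b^3-b$, I first rewrite the quantity being counted as
\[r_{b,k}(N)=\#\{p\in\mathcal{B}_N,\ p\text{ prime}:\ \rev{p}\in\mathcal{B}^*_N,\ \rev{p}\text{ is }k\text{th power-free}\}.\]
Writing the indicator that $\rev{p}$ is $k$th power-free as $\sum_{d^k\mid\rev{p}}\mu(d)$ and interchanging summation, I obtain
\[r_{b,k}(N)=\sum_{\substack{d\ge1\\(d,b^3-b)=1}}\mu(d)\,N_d(N),\qquad N_d(N)=\#\{p\in\mathcal{B}_N,\ p\text{ prime}:\ \rev{p}\in\mathcal{B}^*_N,\ \rev{p}\equiv0\pmod{d^k}\},\]
where the inner count vanishes unless $(d,b^3-b)=1$, since $\rev{p}\in\mathcal{B}^*_N$ forces $(\rev{p},b^3-b)=1$.

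For each squarefree $d$ with $(d,b^3-b)=1$, the quantity $N_d(N)$ counts reversed primes lying in the single residue class $0\pmod{d^k}$. I would evaluate it using the cited equidistribution estimate for reversed primes in arithmetic progressions, which for moduli $q\le b^{\theta N}$ (with an explicit level $\theta=\theta(b)$) gives $N_d(N)=\tfrac{1}{d^k}M(N)+E_d(N)$ with a power-saving error $E_d(N)$, where $M(N)$ is the total number of admissible reversed primes. The membership $\rev{p}\in\mathcal{B}^*_N$ imposes $(\rev{p},b)=1$, i.e.\ the leading digit of $p$ is coprime to $b$, together with $(\rev{p},b^2-1)=(p,b^2-1)=1$, which is automatic for large primes; equidistribution of the leading digit at this single scale then produces the factor $\varphi(b)/b$, giving $M(N)=\tfrac{\varphi(b)}{b}\tfrac{b^N}{\log b^N}(1+O_b(1/N))$, with the secondary term coming from the prime number theorem. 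Summing the main terms and completing the absolutely convergent series via
\[\sum_{\substack{d\ge1\\(d,b^3-b)=1}}\frac{\mu(d)}{d^k}=\frac{1}{\zeta(k)}\prod_{p\mid b^3-b}\left(1-\frac{1}{p^k}\right)^{-1}\]
reproduces exactly the arithmetic constant in \eqref{rbeq}.

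For the range $d>D$ I bound $N_d(N)$ trivially by the number of $N$-digit multiples of $d^k$, so that $N_d(N)\ll b^N/d^k+1$. As $d^k\mid\rev{p}<b^N$ forces $d<b^{N/k}$, the total tail contribution is $\ll b^N/D^{k-1}+b^{N/k}$, and for every $k\ge2$ both terms are negligible against $M(N)\asymp_b b^N/N$ as soon as $D$ grows faster than any fixed power of $N$. Hence the entire difficulty is concentrated in the range $d\le D$.

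The main obstacle is controlling $\sum_{d\le D}|E_d(N)|$, and the admissible size of $D$ is governed by the level of distribution $\theta(b)$ of reversed primes supplied by the cited estimate. This is precisely where the hypothesis $b\ge 26000$ enters: the underlying exponential-sum input yields a usable level, and an error beating the main term, only once the base is sufficiently large. With such a $\theta(b)$ in hand I would take $D=b^{\theta N/(2k)}$, bound $\sum_{d\le D}|E_d(N)|$ by $D\max_{q\le D^k}|E_q(N)|$ (or appeal to an averaged, Bombieri--Vinogradov-type version if available), and verify that this together with the tail is $O_b(M(N)/N)$. Assembling the three contributions yields \eqref{rbeq}, and specialising to $k=2$ gives infinitely many primes with square-free reverse. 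I expect the delicate point throughout to be tracking the base-dependent level $\theta(b)$ so that the $k=2$ case---where $\sum_d d^{-k}$ converges most slowly and the sieve range must reach furthest---still closes with the stated $O_b(1/N)$ error.
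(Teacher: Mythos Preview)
Your approach is essentially the paper's: write $r_{b,k}(N)$ via the M\"obius identity, split at a threshold $D$, feed the small-$d$ range into the reversed-primes-in-progressions estimate, and bound the tail trivially. The main-term computation, the Euler-product evaluation of $\sum_{(d,b^3-b)=1}\mu(d)/d^k$, and the identification of the factor $\varphi(b)/b$ from the leading-digit condition all match the paper.

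The one point to correct is your description of the input. The available Telhcirid-type result (Lemma~\ref{telhcirid2lem}) does \emph{not} give a power level $q\le b^{\theta N}$ with power-saving error; it only gives
\[
\rev{\pi}_N^*(0,d)=\frac{1}{d}\frac{\varphi(b)}{b}\frac{b^N}{\log b^N}\bigl(1+O_b(1/N)\bigr)+O_b\!\left(\frac{b^N}{e^{c\sqrt{N}}}\right)
\]
uniformly for $d\le e^{c\sqrt{N}}$. Consequently your proposed cutoff $D=b^{\theta N/(2k)}$ is far beyond what the lemma permits. Fortunately this weaker level is still more than enough: the paper simply takes $D=N^2$, so that $d^k\le N^{2k}\le e^{c\sqrt N}$ for large $N$, the accumulated error $\sum_{d\le N^2}|E_d(N)|\ll N^2 b^N e^{-c\sqrt N}$ is negligible, and your own tail bound $\ll b^N/D^{k-1}=b^N/N^{2(k-1)}\ll b^N/N^2$ is already $O_b(M(N)/N)$ for every $k\ge 2$. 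With that adjustment your argument coincides with the paper's.
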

The asymptotic \eqref{rbeq} indicates that, after accounting for the standard arithmetical relations modulo $b$ and $b^2-1$, the condition of $\rev{p}$ being $k$th power-free is independent from $p$ being prime. To see this, we first note that ${1/\zeta(k)}$ is the natural density of $k$th power-free numbers, and the product over $p\mid b^3-b$ arises by removing primes $p\mid b$ and $p\mid b^2-1$ from the Euler product of $1/\zeta(k)$. Then, since we are only considering $\rev{p}\in\mathcal{B}_N^*$ with $(\rev{p},b)=1$, the leading digit of $p$ must be coprime to $b$. By the prime number theorem, the number of such $p\in\mathcal{B}_N$ is asymptotically given by $\frac{\varphi(b)}{b}\frac{b^N}{\log b^N}$.

The main tool required to prove Theorem \ref{revthm} is a variation of an estimate for reversed primes in arithmetic progressions due to Bhowmik and Suzuki \cite[Theorem 1.1]{bhowmik2024telhcirid}, which they refer to as \emph{Telhcirid's}\footnote{Telhcirid is Dirichlet spelt backwards.} theorem. In \cite{bhowmik2024telhcirid}, Telhcirid's theorem is only proven for bases $b\geq 31699$, but we have taken the opportunity to lower this to $b\geq 26000$ using a simple computational argument. 

Next, with regard to Conjecture \ref{palincon2}, currently the best result in this direction is due to recent work of Tuxanidy and Panario \cite{tuxanidy2024infinitude}. In \cite{tuxanidy2024infinitude}, it is proven that there are infinitely many palindromes with at most 6 prime factors. Trivially, this yields infinitely many palidromes that are 7th power-free. However, by combining Tuxanidy and Panario's equidistribution estimate with a Brun--Titchmarsh style result due to Banks and Shparlinski \cite{banks2005prime}, we are able to prove the following.

\begin{theorem}\label{palinthm}
    For all bases $b\geq 2$, there are infinitely many cube-free palindromes. More generally, for any $k\geq 3$, if
    \begin{equation*}
        \mathcal{P}_b^*(x)=\{n\leq x:(n,b^3-b)=1\ \text{and}\ n=\rev{n}\}
    \end{equation*}
    and
    \begin{equation*}
        p_{k,b}(x)=\#\{n\in\mathcal{P}_b^*(x):n\ \text{is}\ k\text{th}\ \text{power-free}\},
    \end{equation*}
    then there exists an effectively computable constant $c_2(b)>0$ such that,
    \begin{equation}\label{p4beq}
        p_{k.b}(x)=\frac{|\mathcal{P}_b^*(x)|}{\zeta(k)}\prod_{p\mid b^3-b}\left(1-\frac{1}{p^k}\right)^{-1}\left(1+O_{b}\left(\frac{1}{e^{c_2(b)\sqrt{\log x}}}\right)\right).
    \end{equation}
\end{theorem}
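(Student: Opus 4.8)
The plan is to expand the $k$th power-free indicator by Möbius inversion and thereby reduce the problem to counting palindromes in the arithmetic progressions $0 \pmod{d^k}$, which is precisely what the equidistribution estimate of Tuxanidy--Panario and the Brun--Titchmarsh bound of Banks--Shparlinski are designed to control. Writing $S(x;q)=\#\{n\in\mathcal{P}_b^*(x):q\mid n\}$ and using that $\sum_{d^k\mid n}\mu(d)$ equals $1$ when $n$ is $k$th power-free and $0$ otherwise, I would first obtain
\[
p_{k,b}(x)=\sum_{\substack{d\le x^{1/k}\\(d,\,b^3-b)=1}}\mu(d)\,S(x;d^k),
\]
where the condition $(d,b^3-b)=1$ may be imposed for free: any $d$ sharing a prime factor with $b^3-b$ forces that prime to divide $n$, contradicting $(n,b^3-b)=1$, so such terms vanish.

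Next I would split the sum at a parameter $D=D(x)$ to be optimized at the end. For the small moduli $d\le D$, I would insert the Tuxanidy--Panario equidistribution estimate, which in its range of validity gives $S(x;d^k)=|\mathcal{P}_b^*(x)|/d^k$ with a multiplicative error of the expected quality. Summing the main terms and completing the resulting Dirichlet series to infinity produces
\[
|\mathcal{P}_b^*(x)|\sum_{\substack{d\ge1\\(d,\,b^3-b)=1}}\frac{\mu(d)}{d^k}=\frac{|\mathcal{P}_b^*(x)|}{\zeta(k)}\prod_{p\mid b^3-b}\left(1-\frac{1}{p^k}\right)^{-1},
\]
which is exactly the predicted main term, the tail of the completed series contributing only $O\!\left(|\mathcal{P}_b^*(x)|\,D^{-(k-1)}\right)$. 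For the large moduli $d>D$, I would invoke the Banks--Shparlinski Brun--Titchmarsh bound to control $S(x;d^k)$ by $|\mathcal{P}_b^*(x)|/d^k$ up to logarithmic and trivial losses, so that this range contributes $O\!\left(|\mathcal{P}_b^*(x)|\,D^{-(k-1)}\right)$ together with an accumulation of $O(1)$ terms totalling $O(x^{1/k})$.

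Finally I would optimize $D$: balancing the equidistribution error accumulated over $d\le D$ against the Brun--Titchmarsh tail $O\!\left(|\mathcal{P}_b^*(x)|\,D^{-(k-1)}\right)$ pins down the saving and yields the stated error $O_b\!\left(e^{-c_2(b)\sqrt{\log x}}\right)$, the residual $O(x^{1/k})$ being negligible against $|\mathcal{P}_b^*(x)|\asymp x^{1/2}$ for $k\ge3$. Positivity of the leading coefficient then gives infinitely many cube-free palindromes upon taking $k=3$.

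The main obstacle I anticipate is the control of the large-modulus tail, not the extraction of the main term. Two features make it delicate. First, the equidistribution estimate is strong only for moduli in a restricted range, so $D$ cannot be taken anywhere near $x^{1/k}$, and the Brun--Titchmarsh bound must carry the full burden on $D<d\le x^{1/k}$, including the regime $d^k>|\mathcal{P}_b^*(x)|\asymp x^{1/2}$ where there is on average fewer than one palindrome per residue class. Second, and decisively, each of the $\asymp x^{1/k}$ moduli contributes an essentially unavoidable $O(1)$ rounding error, so the tail carries a term of size $O(x^{1/k})$; this is $o(|\mathcal{P}_b^*(x)|)$ exactly when $k\ge3$, which is precisely why the method delivers cube-free (and higher) palindromes while saying nothing about the square-free case $k=2$ left open in Conjecture \ref{palincon2}.
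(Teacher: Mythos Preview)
Your overall architecture is exactly that of the paper: M\"obius inversion, a split at a threshold $D$, the Tuxanidy--Panario equidistribution estimate for $d\le D$, and the Banks--Shparlinski Brun--Titchmarsh bound for $d>D$ (the paper simply fixes $D=x^{(1/5-\varepsilon)/k}$ from the start rather than optimising at the end).

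There is, however, a genuine misquotation in your tail analysis. The Banks--Shparlinski bound does \emph{not} give $S(x;q)\ll |\mathcal{P}_b^*(x)|/q$ up to logarithms; it gives only
\[
S(x;q)\ll_b \frac{|\mathcal{P}_b^*(x)|}{\sqrt{q}},
\]
with a square root in the denominator and no separate $O(1)$ term. Applied with $q=d^k$, the large-modulus tail is
\[
\sum_{d>D}\frac{|\mathcal{P}_b^*(x)|}{d^{k/2}}\ll_b\frac{|\mathcal{P}_b^*(x)|}{D^{k/2-1}},
\]
and this converges precisely when $k/2>1$, i.e.\ $k\ge 3$. That is the true source of the restriction to $k\ge 3$, not an accumulation of $O(1)$ rounding errors totalling $O(x^{1/k})$; no such term appears anywhere in the argument. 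Your final paragraph therefore locates the obstruction in the wrong place, even though it happens to recover the same threshold. As the paper remarks in its discussion, it is exactly the square-root saving in the Banks--Shparlinski bound that one would need to sharpen (to an exponent $1/2+\delta$) in order to reach $k=2$.
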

Therefore, whilst we can give a result for all bases $b\geq 2$ (unlike Theorem \ref{revthm}), more work is required to detect square-free palindromes and prove Conjecture \ref{palincon2}. As in Theorem \ref{revthm}, the asymptotic \eqref{p4beq} indicates that the condition  of $n\in\mathcal{P}_b^*(x)$ being a palindrome is independent of $n$ being $k$th power-free (for $k\geq 3$).

Notably, our proof of Theorem \ref{palinthm} is independent of the sieve-theoretic arguments used by Tuxanidy and Panario to obtain infinitely many palindromes with at most $6$ prime factors. Consequently, we are able to also prove the following.

\begin{theorem}\label{palinthm2}
    For all bases $b\geq 2$, there are infinitely many cube-free palindromes with at most $6$ prime factors.
\end{theorem}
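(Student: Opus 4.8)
The plan is to combine the cube-free detection underlying Theorem \ref{palinthm} with the weighted sieve of Tuxanidy and Panario \cite{tuxanidy2024infinitude}, exploiting the fact that the two are methodologically independent so that cube-freeness can simply be overlaid. First I would open up their sieve and extract, in place of the bare qualitative statement, the quantitative lower bound
\begin{equation*}
    \#\{n\in\mathcal{P}_b^*(x):\Omega(n)\leq 6,\ P^-(n)\geq z\}\gg_b\frac{|\mathcal{P}_b^*(x)|}{\log x},
\end{equation*}
where $\Omega(n)$ counts prime factors with multiplicity, $P^-(n)$ denotes the least prime factor of $n$, and $z=x^{\alpha}$ for a fixed $\alpha>0$ is the sifting level. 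Such a lower bound, together with the structural condition $P^-(n)\geq z$, is exactly what a Richert-type weighted sieve produces, and its only arithmetic input is the equidistribution estimate for palindromes in arithmetic progressions already used for Theorem \ref{palinthm}. Restricting to $\mathcal{P}_b^*(x)$ is harmless, since it only removes palindromes.

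The key point is that a palindrome $n$ with $P^-(n)\geq z$ can fail to be cube-free only if $p^3\mid n$ for some prime $p\geq z$. Hence, writing $\mathcal{N}(x)$ for the number of $n\in\mathcal{P}_b^*(x)$ divisible by $p^3$ for some prime $p\geq z$,
\begin{equation*}
    \mathcal{N}(x)\leq\sum_{z\leq p\leq x^{1/3}}\#\{n\in\mathcal{P}_b^*(x):p^3\mid n\}.
\end{equation*}
I would estimate the summand by splitting the range of $p$ at the level of distribution: for small $p$ the equidistribution estimate gives $\#\{n\in\mathcal{P}_b^*(x):p^3\mid n\}\asymp|\mathcal{P}_b^*(x)|/p^3$, whose tail over $p\geq z$ is $\ll|\mathcal{P}_b^*(x)|\,z^{-2}$, while for large $p$, where $p^3$ exceeds the level of distribution, I would invoke the Banks--Shparlinski Brun--Titchmarsh inequality \cite{banks2005prime}, just as in the proof of Theorem \ref{palinthm}. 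Either way the total is $\ll_b|\mathcal{P}_b^*(x)|\,x^{-\eta}$ for some fixed $\eta>0$, which is $o(|\mathcal{P}_b^*(x)|/\log x)$ because a fixed power of $x$ beats any power of $\log x$.

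Subtracting these non-cube-free survivors then gives
\begin{align*}
    \#\{n\in\mathcal{P}_b^*(x):\Omega(n)\leq 6,\ n\ \text{cube-free}\}
    &\geq \#\{n\in\mathcal{P}_b^*(x):\Omega(n)\leq 6,\ P^-(n)\geq z\}-\mathcal{N}(x)\\
    &\gg_b\frac{|\mathcal{P}_b^*(x)|}{\log x},
\end{align*}
which tends to infinity with $x$; since every element of $\mathcal{P}_b^*(x)$ is a palindrome, letting $x\to\infty$ produces infinitely many cube-free palindromes with at most $6$ prime factors, as required. The step I expect to be the main obstacle is not this cube-free removal, which is routine given Theorem \ref{palinthm}, but the two quantitative inputs drawn from the Tuxanidy--Panario argument: verifying that their sieve delivers a lower bound of order $|\mathcal{P}_b^*(x)|/\log x$ with all survivors free of prime factors below a fixed power $z=x^\alpha$, and checking that the Brun--Titchmarsh bound remains usable for moduli $p^3$ as large as $x$, well beyond the natural level of distribution (recall $|\mathcal{P}_b^*(x)|\asymp x^{1/2}$), where the equidistribution estimate no longer applies.
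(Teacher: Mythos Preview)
Your approach is correct and essentially the same as the paper's: start from the Tuxanidy--Panario lower bound with the sifted condition $P^-(n)\geq x^\alpha$, then remove the non-cube-free survivors using the Banks--Shparlinski bound. Two simplifications resolve the obstacles you flag: the quantitative lower bound with $P^-(n)\geq x^{1/21}$ is already the \emph{statement} of \cite[Theorem~1.4]{tuxanidy2024infinitude} (Lemma~\ref{tuxthm} here), so there is no need to reopen the sieve; and Lemma~\ref{brunlem} holds uniformly for all moduli $d$, so no splitting at the level of distribution is required---one simply bounds $\sum_{d\geq x^{1/21}}|\mathcal{P}_b(x)|/d^{3/2}\ll|\mathcal{P}_b(x)|/x^{1/42}$ directly.
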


We also remark that we have made different notational choices between Theorems \ref{revthm} and \ref{palinthm}. Most notably, Theorem \ref{revthm} is stated as an asymptotic over numbers of fixed digit length, whereas Theorem \ref{palinthm} is stated as a result over numbers less than $x$. Results in either of these forms are closely related, and our definitions and notation were chosen as to best agree with the existing literature on reversed primes and palindromes.

The structure of the paper is as follows. In Section \ref{revsect} we prove Theorem \ref{revthm}, and in Section \ref{palinsect} we prove Theorems \ref{palinthm} and \ref{palinthm2}. Then, in Section \ref{discsect} we provide further discussion. In particular, we discuss further possible improvements to our results, along with some related questions. An appendix is also included which gives the computational details for our refinement of Telhcirid's theorem.

\subsection{Comments on recent work}\label{recentworksect}
Since the time of writing, there has been influx of further work by other authors that complements and improves upon the results here. We make particular note of:
\begin{enumerate}
    \item In the preprint \cite{santana2025}, it is also shown that there are infinitely many cube-free palindromes (albeit for $b\geq 1100$) via a different Fourier-analytic approach.
    \item Via private correspondence, G. Bhowmik and Y. Suzuki informed us that they were writing up a further refinement of Telhcirid's theorem that holds for all bases $b\geq 2$. Their preprint was subsequently released \cite{bhowmik2025zsiflaw}. As a consequence of their result, one can extend the range of $b$ in Theorem \ref{revthm} to $b\geq 2$ and thereby prove Conjecture \ref{revcon2}.
    \item Dartyge, Rivat and Swaenepoel released a preprint \cite{dartyge2025prime} which, for all bases $b\geq 2$, establishes a computable constant $K_b$ such that there are infinitely many reversed primes with at most $K_b$ prime factors. This provides an answer to our Question \ref{revq} and establishes avenues for further work in improving their values of $K_b$.
\end{enumerate}

\section*{Acknowledgements}
We thank Igor Shparlinski, Yuta Suzuki and Timothy Trudgian for their helpful comments on an earlier version of this paper. Both authors' research was supported by an Australian Government Research Training Program (RTP) Scholarship.

\section{Proof of Theorem \ref{revthm}}\label{revsect}
In this section we prove Theorem \ref{revthm} on $k$th power-free reversed primes. To do so, we first define 
\begin{align}
    \rev{\pi}_N(a,d)&:=\sum_{\substack{\rev{p}\in\mathcal{B}_N\\\rev{p}\equiv a\ \text{(mod $d$)}}}1\label{revpidef}\\
    \rev{\pi}_N^*(a,d)&:=\sum_{\substack{\rev{p}\in\mathcal{B}_N^*\\\rev{p}\equiv a\ \text{(mod $d$)}}}1\label{revpistardef}.
\end{align}
In particular, $\rev{\pi}_N(a,d)$ counts the number of reversed primes $\rev{p}$ in an arithmetic progression $a\pmod{d}$, and $\rev{\pi}^*_N(a,d)$ adds the condition $(\rev{p},b^3-b)=1$. In \cite[Theorem 1.1]{bhowmik2024telhcirid} an asymptotic expression for $\rev{\pi}_N(a,q)$ is given, that is, ``Telhcirid's theorem". For our purposes, we require an asymptotic expression for $\pi_N^*(0,d)$ when $(d,b^3-b)=1$. To achieve this, we make some small changes to the argument in \cite{bhowmik2024telhcirid} and use the calculations in the appendix to extend the range of bases to $b\geq 26000$, as opposed to the range $b\geq 31699$ given in \cite{bhowmik2024telhcirid}.

\begin{lemma}\label{telhcirid2lem}
    Let $b\geq 26000$ and $d\geq 1$ with $(d,b^3-b)=1$. Then, there exists an effectively computable constant $c=c(b)\in(0,1)$ such that
    \begin{equation}\label{telhcirid2eq}
        \rev{\pi}_N^*(0,d)=\frac{1}{d}\frac{\varphi(b)}{b}\frac{b^N}{\log b^N}\left(1+O_b\left(\frac{1}{N}\right)\right)+O_b\left(\frac{b^N}{e^{c\sqrt{N}}}\right),
    \end{equation}
    provided
    \begin{equation}\label{drange}
        d\leq\exp(c\sqrt{N}).  
    \end{equation}
\end{lemma}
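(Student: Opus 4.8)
The plan is to deduce Lemma~\ref{telhcirid2lem} from Telhcirid's theorem \cite[Theorem 1.1]{bhowmik2024telhcirid} by imposing the coprimality condition $(\rev{p},b^3-b)=1$ on top of the unrestricted count $\rev{\pi}_N(a,q)$. The first observation is that, for $N$ large, this coprimality reduces to a condition only modulo $b$. Indeed, writing $b^3-b=b(b^2-1)$, the factor $(\rev{p},b^2-1)=1$ is automatic: by \eqref{b2m1iff} we have $(\rev{p},b^2-1)>1$ if and only if $(p,b^2-1)>1$, and since $p$ is prime with $p\geq b^{N-1}>b^2-1$, the latter forces $p\mid b^2-1$ and so cannot occur. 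Hence for $N$ sufficiently large,
\begin{equation*}
    \rev{\pi}_N^*(0,d)=\#\{\rev{p}\in\mathcal{B}_N:\rev{p}\equiv 0\pmod{d}\ \text{and}\ (\rev{p},b)=1\}.
\end{equation*}

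Next I would remove the condition $(\rev{p},b)=1$ by M\"obius inversion over the primes dividing $b$. Writing $\mathbf{1}[(\rev{p},b)=1]=\sum_{e\mid\mathrm{rad}(b),\ e\mid\rev{p}}\mu(e)$ and noting that $(d,e)=1$ for every $e\mid\mathrm{rad}(b)$ (since $(d,b^3-b)=1$), the Chinese remainder theorem turns the joint conditions $\rev{p}\equiv 0\pmod{d}$ and $e\mid\rev{p}$ into $\rev{p}\equiv 0\pmod{de}$. This yields the finite decomposition
\begin{equation*}
    \rev{\pi}_N^*(0,d)=\sum_{e\mid\mathrm{rad}(b)}\mu(e)\,\rev{\pi}_N(0,de),
\end{equation*}
in which the number of terms is $2^{\omega(b)}=O_b(1)$.

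The third step is to apply Telhcirid's theorem to each $\rev{\pi}_N(0,de)$. The modulus $de$ now shares the factor $e$ with $b$, so this is precisely where the argument of \cite{bhowmik2024telhcirid} must be adjusted, and where the appendix computation lowers the admissible range to $b\geq 26000$. The key simplification is that, because $e\mid b$, one has $\rev{p}\equiv n_{N-1}\pmod{e}$, so the congruence $e\mid\rev{p}$ is simply the requirement that the leading digit $n_{N-1}$ of $p$ be divisible by $e$. Tracking this leading-digit density through the argument, I expect each term to satisfy
\begin{equation*}
    \rev{\pi}_N(0,de)=\frac{1}{d}\left(\frac{b}{e}-1\right)\frac{b^{N-1}}{\log b^N}\left(1+O_b\left(\frac{1}{N}\right)\right)+O_b\left(\frac{b^N}{e^{c\sqrt{N}}}\right),
\end{equation*}
uniformly for $de\leq\exp(c\sqrt{N})$; since $e\leq b=O_b(1)$, this holds throughout the range \eqref{drange} after adjusting $c$.

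Finally I would collect the main terms. Summing the displayed asymptotics against $\mu(e)$ produces the main term
\begin{equation*}
    \frac{b^{N-1}}{d\log b^N}\sum_{e\mid\mathrm{rad}(b)}\mu(e)\left(\frac{b}{e}-1\right)=\frac{b^{N-1}}{d\log b^N}\left(b\prod_{\ell\mid b}\left(1-\frac{1}{\ell}\right)-\prod_{\ell\mid b}(1-1)\right)=\frac{\varphi(b)}{d}\frac{b^{N-1}}{\log b^N},
\end{equation*}
where the second product vanishes since $b\geq 2$ has a prime factor and the first equals $\varphi(b)/b$; this is exactly $\frac{1}{d}\frac{\varphi(b)}{b}\frac{b^N}{\log b^N}$, as required, while the $O_b(1)$ error terms combine to $O_b(b^N/e^{c\sqrt{N}})$. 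The genuine obstacle is not this M\"obius bookkeeping but the analytic input itself: obtaining Telhcirid's theorem with the correct leading-digit-dependent main term for moduli sharing factors with $b$, and verifying the numerical condition that extends its validity down to $b\geq 26000$, which is the content of the modifications to \cite{bhowmik2024telhcirid} and the appendix.
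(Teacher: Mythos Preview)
Your proposal is correct and reaches the right main term, but organises the argument differently from the paper. Rather than M\"obius inversion over $e\mid\mathrm{rad}(b)$, the paper writes $\rev{\pi}_N^*(0,d)$ directly as the exponential sum $\frac{1}{d}\sum_{0\leq h<d}\sum_{\rev{p}\in\mathcal{B}_N^*}e(h\rev{p}/d)$. Because $(d,b^3-b)=1$, the only $h$ with $d\mid(b^2-1)b^Nh$ is $h=0$, and that term is simply $\frac{1}{d}\#\{\rev{p}\in\mathcal{B}_N^*\}$; this is then evaluated exactly as you describe---the $(b^2-1)$-condition is vacuous for large $N$, the $b$-condition restricts the leading digit $i$ of $p$ to $(i,b)=1$, and one sums the prime number theorem over those $\varphi(b)$ intervals $[ib^{N-1},(i+1)b^{N-1})$ to obtain the main term directly, with no M\"obius step. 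The terms $h\neq 0$ are bounded by the Bhowmik--Suzuki machinery with the leading-digit restriction carried along. The advantage of the paper's route is that the modulus stays coprime to $b^3-b$ throughout; your detour through $\rev{\pi}_N(0,de)$ manufactures moduli with $(de,b)>1$, obliging you to formulate and prove a variant of Telhcirid's theorem for such moduli (your heuristic main term $(b/e-1)\frac{b^{N-1}}{d\log b^N}$ is correct, but establishing it still requires opening up \cite{bhowmik2024telhcirid}). Both approaches ultimately rest on the same analytic input---the exponential-sum error bounds of \cite{bhowmik2024telhcirid} extended to $b\geq 26000$ via the appendix---so the difference is organisational rather than substantive.
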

\begin{proof}
    In what follows we use the standard notation $e(x)=\exp(2\pi i x)$ when considering exponential sums. As per the approach in \cite[Section 8]{bhowmik2024telhcirid}, we write
    \begin{align}\label{thetabigeq}
        \overleftarrow{\pi}^{*}_{N}(0,d) & = \frac{1}{d} \sum_{0 \leq h < d} \sum_{\overleftarrow{p} \in \mathcal{B}^{*}_{N}} e \left(\frac{h \overleftarrow{p} }{d} \right)\notag\\
        & =\frac{1}{d} \sum_{\substack{0 \leq h < d \\ d \mid (b^{2}-1)b^{N}h}} \sum_{\substack{\overleftarrow{p} \in \mathcal{B}^{*}_{N}}} e \left(\frac{h \overleftarrow{p} }{d} \right)+\frac{1}{d} \sum_{ \substack{0 \leq h < d\\ d \nmid (b^{2}-1)b^{N}h}} \sum_{\substack{\overleftarrow{p} \in \mathcal{B}^{*}_{N}}} e \left(\frac{h\overleftarrow{p} }{d} \right).
    \end{align}
    We begin with the first term of \eqref{thetabigeq}, which will give us the main term of \eqref{telhcirid2eq}. Here, we note that since $(d,b^3-b)=1$,
    \begin{equation*}
        \frac{1}{d} \sum_{\substack{0 \leq h < d \\ d \mid (b^{2}-1)b^{N}h}} \sum_{\substack{\overleftarrow{p} \in \mathcal{B}^{*}_{N}}} e \left(\frac{h \overleftarrow{p} }{d} \right)=\frac{1}{d}\sum_{\rev{p}\in\mathcal{B}_N^*}1=\frac{1}{d}\sum_{\substack{p\in\mathcal{B}_N\\(\rev{p},b^3-b)=1}}1.
    \end{equation*}
    As discussed in the introduction, (see \eqref{b2m1iff}) $(p,b^2-1)>1$ if and only if ${(\rev{p},b^2-1)>1}$. Therefore, for a fixed $b\geq 2$, since there can only be finitely many primes $p$ with $(p,b^2-1)>1$, there are also finitely many $\rev{p}$ with $(\rev{p},b^2-1)=1$. In particular,
    \begin{equation*}
        \frac{1}{d}\sum_{\substack{p\in\mathcal{B}_N\\(\rev{p},b^3-b)=1}}1=\frac{1}{d}\sum_{\substack{p\in\mathcal{B}_N\\(\rev{p},b)=1\\ (\rev{p},b^2-1)=1}}1=\frac{1}{d}\sum_{\substack{p\in\mathcal{B}_N\\(\rev{p},b)=1}}1+O_b(1).
    \end{equation*}
    We now show that
    \begin{equation*}
        \sum_{\substack{p\in\mathcal{B}_N\\(\rev{p},b)=1}}1=\frac{\varphi(b)}{b}\frac{b^N}{\log b^N}\left(1+O_b\left(\frac{1}{N}\right)\right)+O_b\left(\frac{b^N}{e^{c'\sqrt{N}}}\right)
    \end{equation*}
    for some constant $c'\in(0,1)$. To see this, we split the sum over $p$ as follows:
    \begin{equation}\label{ispliteq}
        \sum_{\substack{p\in\mathcal{B}_N\\(\rev{p},b)=1}}1=\sum_{i=1}^{b-1}\sum_{\substack{ib^{N-1}\leq p<(i+1)b^{N-1}\\(\rev{p},b)=1}}1+O(1),
    \end{equation}
    where the $O(1)$ appears since we are removing the mild condition that $b\nmid p$ (i.e.\ $p\neq b$) in the definition of $\mathcal{B}_N$. Now, the condition $(\rev{p},b)=1$ is determined by the first digit of $p$. In the context of \eqref{ispliteq}, we have $(\rev{p},b)=1$ if and only if $(i,b)=1$. That is,
    \begin{align*}
        \sum_{i=1}^{b-1}\sum_{\substack{ib^{N-1}\leq p<(i+1)b^{N-1}\\(\rev{p},b)=1}}1&=\sum_{\substack{i=1\\(i,b)=1}}^{b-1}\sum_{ib^{N-1}\leq p<(i+1)b^{N-1}}1\\
        &=\sum_{\substack{i=1\\(i,b)=1}}^{b-1}\left[\pi((i+1)b^{N-1})-\pi(ib^{N-1})\right]+O(1),
    \end{align*}
    where $\pi$ is the usual prime counting function. By a sufficiently strong, effective version of the prime number theorem (e.g.\ \cite[Corollary 1.3]{johnston2023some}), there exists $c'\in(0,1)$ such that
    \begin{align*}
        \pi((i+1)b^{N-1})-\pi(ib^{N-1})&=\int_{ib^{N-1}}^{(i+1)b^{N-1}}\frac{1}{\log t}\mathrm{d}t+O\left(\frac{b^N N}{e^{c'\sqrt{N}}}\right)\\
        &=\frac{b^N}{\log b^N}\left(1+O_b\left(\frac{1}{N}\right)\right)+O\left(\frac{b^N N}{e^{c'\sqrt{N}}}\right)
    \end{align*}
    and thus
    \begin{equation*}
        \sum_{\substack{i=1\\(i,b)=1}}^{b-1}\left[\pi((i+1)b^{N-1})-\pi(ib^{N-1})\right]=\frac{\varphi(b)}{b}\frac{b^N}{\log b^N}\left(1+O_b\left(\frac{1}{N}\right)\right)+O_b\left(\frac{b^N N}{e^{c'\sqrt{N}}}\right),
    \end{equation*}
    as required. To complete the proof of the lemma, it suffices to show that the second term in \eqref{thetabigeq} satisfies
    \begin{equation}\label{secondtermreq}
        \frac{1}{d} \sum_{ \substack{0 \leq h < d\\ d \nmid (b^{2}-1)b^{N}h}} \sum_{\substack{\overleftarrow{p} \in \mathcal{B}^{*}_{N}}} e \left(\frac{h\overleftarrow{p} }{d} \right)\ll_b \frac{b^N N}{e^{c''\sqrt{N}}}
    \end{equation}
    for some $c''>0$ and $d\leq\exp(c''\sqrt{N})$. Provided $b$ is sufficiently large, \eqref{secondtermreq} follows by the same reasoning as that in \cite[\S 8--11]{bhowmik2024telhcirid}, again noting that there are only $O_b(1)$ primes $p$ such that $(\rev{p},b^2-1)=1$, and that the condition $(\rev{p},b)=1$ is completely determined by the first base-$b$ digit of $p$. As discussed in our appendix, $b\geq 26000$ suffices.
\end{proof}
We now prove Theorem \ref{revthm}.
\begin{proof}[Proof of Theorem \ref{revthm}]
    We write $\mu_k(n)$ to be the indicator function for $k$-free integers so that $\mu_k(n)=1$ if $n$ is $k$th power-free, and $\mu_k(n)=0$ otherwise. In terms of the standard M\"obius function $\mu$, we have
    \begin{equation*}
        \mu_k(n)=\sum_{d^k\mid n}\mu(d)
    \end{equation*}
    so that
    \begin{equation}\label{rbproofeq}
        r_{k,b}(N)=\sum_{\rev{p}\in\mathcal{B}_N^*}\mu_k(\rev{p})=\sum_{\rev{p}\in\mathcal{B}_N^*}\sum_{d^k\mid\rev{p}}\mu(d).
    \end{equation}
    We split the double sum in \eqref{rbproofeq} into $S_1(N)+S_2(N)$, with
    \begin{align}
        S_1(N)&=\sum_{d\leq N^2}\sum_{\substack{\rev{p}\in\mathcal{B}_N^*\\ d^k\mid\rev{p}}}\mu(d)\label{s1neq}\\
        S_2(N)&=\sum_{d>N^2}\sum_{\substack{\rev{p}\in\mathcal{B}_N^*\\d^k\mid\rev{p}}}\mu(d).\notag
    \end{align}
    To begin with, we bound $S_2(N)$ as
    \begin{equation*}
        |S_2(N)|\leq\sum_{d>N^2}\sum_{\substack{n\in \mathcal{B}_N\\ d^k\mid n}}1\ll\sum_{d>N}\frac{|\mathcal{B}_N|}{d^k}\ll\frac{|\mathcal{B}_N|}{N^{2(k-1)}}=O_{b}\left(\frac{b^N}{N^2}\right)
    \end{equation*}
    noting that $|\mathcal{B}_N|<b^N$ and $k\geq 2$. Hence, $S_2(N)$ is sufficiently small so it suffices to show that $S_1(N)$ satisfies the asymptotic in \eqref{rbeq}. To estimate $S_1(N)$, we write
    \begin{equation*}
        S_1(N)=\sum_{\substack{d\leq N^2\\ (d,b^3-b)=1}}\sum_{\substack{\rev{p}\in\mathcal{B}_N^* \\d^k\mid\rev{p}}}\mu(d)=\sum_{\substack{d\leq N^2\\ (d,b^3-b)=1}}\mu(d)\overleftarrow{\pi}_N^*(0,d^k)
    \end{equation*}
    with $\rev{\pi}_N^*$ as defined in \eqref{revpistardef}. Hence, by Lemma \ref{telhcirid2lem},
    \begin{align}
        S_1(N)& =\frac{\varphi(b)}{b}\frac{b^N}{\log b^N}\left(1+O_b\left(\frac{1}{N}\right)\right)\sum_{\substack{d\leq N^2\\ (d,b^3-b)=1}}\frac{\mu(d)}{d^k}+O\left(N^2\frac{b^N}{e^{c\sqrt{N}}}\right)\label{s1neq2}
    \end{align}
    for all $b\geq 26000$. To deal with the sum in \eqref{s1neq2}, we write
    \begin{align}\label{revsplit}
        \sum_{\substack{d\leq N^2 \\ (d, b^3-b)=1 }}\frac{\mu(d)}{d^{k}}&= \sum_{(d, b^3-b)=1 }  \frac{\mu(d)}{d^{k}}-\sum_{\substack{d> N^2 \\ (d, b^3-b)=1 }} \frac{\mu(d)}{d^{k}}.
    \end{align}
    By converting to an Euler product,
    \begin{align}\label{revinf}
        \sum_{(d, b^3-b)=1 }  \frac{\mu(d)}{d^{2}} = \frac{1}{\zeta(2)}\prod_{p\mid b^3-b}\left(1-\frac{1}{p^2}\right)^{-1}.
    \end{align}
    Then,
    \begin{equation}\label{revtail}
        \left|\sum_{\substack{d> N^2 \\ (d, b^3-b)=1 }} \frac{\mu(d)}{d^{k}}\right|\leq\sum_{d> N^2} \frac{1}{d^{k}}=O\left(\frac{1}{N}\right).
    \end{equation}
    Substituting \eqref{revinf} and \eqref{revtail} into \eqref{revsplit}, we see that \eqref{s1neq2} reduces to the claimed asymptotic \eqref{rbeq}, thereby completing the proof.
\end{proof}

\section{Proof of Theorem \ref{palinthm}}\label{palinsect}
In this section we prove Theorem \ref{palinthm} by utilising an equidistribution estimate due to Tuxanidy and Panario \cite{tuxanidy2024infinitude} and a Brun--Titchmarsh style result due to Banks and Shparlinski \cite{banks2005prime}. To begin with, for any base $b\geq 2$ we let
\begin{equation*}
    \mathcal{P}_b(x)=\{n\leq x:b\nmid n\ \text{and}\ n=\rev{n}\}
\end{equation*}
denote the set of base-$b$ palindromes less than or equal to $x$, and $\mathcal{P}_b^*(x)$ be as defined in the statement of Theorem \ref{palinthm}.

Since any $N$-digit palindrome is fully determined by its first $\lceil N/2\rceil$ digits, a simple combinatorial argument yields that
\begin{equation}\label{Pbasym}
    |\mathcal{P}_b(x)|\asymp_b\sqrt{x}.
\end{equation}
It turns out that the same is true for $|\mathcal{P}_b^*(x)|$, as proven in \cite{tuxanidy2024infinitude}.

\begin{lemma}[{\cite[Lemma 9.1]{tuxanidy2024infinitude}}]\label{pbstarlem}
    For any base $b\geq 2$, we have
    \begin{equation*}
        |\mathcal{P}_b^*(x)|\asymp_b\sqrt{x}
    \end{equation*}
\end{lemma}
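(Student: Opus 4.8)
The plan is to deduce the lemma from the already-established count \eqref{Pbasym} together with an elementary analysis of the congruence conditions hidden in $\mathcal{P}_b^*(x)$. Since $(n,b^3-b)=1$ forces $b\nmid n$, we have the inclusion $\mathcal{P}_b^*(x)\subseteq\mathcal{P}_b(x)$, so the upper bound $|\mathcal{P}_b^*(x)|\leq|\mathcal{P}_b(x)|\ll_b\sqrt{x}$ is immediate from \eqref{Pbasym}. All of the work therefore lies in the matching lower bound $|\mathcal{P}_b^*(x)|\gg_b\sqrt{x}$.

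For the lower bound I would first unravel the condition $(n,b^3-b)=1$. Writing $b^3-b=b(b-1)(b+1)$, it is equivalent to the three coprimality conditions $(n,b)=1$, $(n,b-1)=1$ and $(n,b+1)=1$, and each is governed by a simple digital statistic: modulo $b$ the residue of $n$ is its last digit, modulo $b-1$ it is the digit sum $S(n)=\sum_i n_i$ (since $b\equiv 1$), and modulo $b+1$ it is the alternating digit sum $A(n)=\sum_i(-1)^i n_i$ (since $b\equiv -1$). A key structural observation is that for a palindrome of even length one has $A(n)=0$, so that $b+1\mid n$ and such $n$ can never be coprime to $b+1$. I would therefore restrict attention to palindromes of odd length $N=2M+1$; by the combinatorial count underlying \eqref{Pbasym}, those up to $x$ still number $\asymp_b\sqrt{x}$, so no loss is incurred.

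With $N=2M+1$ fixed, an odd-length palindrome is determined by its free digits $n_M,n_{M+1},\dots,n_{2M}$, where $n_{2M}=n_0$ is the nonzero leading/trailing digit. I would fix $n_{2M}$ to be any of the $\varphi(b)$ digits coprime to $b$, which secures $(n,b)=1$ at the cost of only a constant factor. It then remains to show that a positive proportion, depending only on $b$, of the remaining free digits yield $(S(n),b-1)=1$ and $(A(n),b+1)=1$ simultaneously. For this I would single out the middle digit $n_M$, whose contributions to $S$ and $A$ are $n_M$ and $(-1)^M n_M$ with \emph{unit} coefficients modulo $b-1$ and $b+1$, together with one interior free digit $n_j$ (which contributes $2$ and $2(-1)^j$ respectively); varying this pair steers $(S\bmod(b-1),\,A\bmod(b+1))$ across the residue group, and a counting argument then shows that a positive fraction of choices land in the $\varphi(b-1)\varphi(b+1)$ admissible classes. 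Averaging over the remaining digits, which merely translate the target residues, preserves this proportion, giving $\gg_b b^M\asymp_b\sqrt{x}$ palindromes in $\mathcal{P}_b^*(x)$.

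The main obstacle is the simultaneous control of the two moduli $b-1$ and $b+1$: because a palindrome's digits are not independent, each free digit contributes to both $S$ and $A$ at once, so one cannot simply invoke the Chinese Remainder Theorem. The difficulty sharpens when $b$ is odd, since then $b-1$ and $b+1$ are both even while an interior digit (appearing twice) contributes an even amount to each statistic. I would resolve this by exploiting the middle digit, which appears once and hence contributes with unit coefficient, and by choosing interior steering digits at positions of opposite parity so that their contributions to $S$ and to $A$ can be combined independently; this decouples the two congruences enough to guarantee the required positive density. A slightly softer alternative would be to prove directly that odd-length palindromes are nearly equidistributed in the residue classes modulo $b^2-1$, which is elementary since the residue is an explicit function of the digits, and then read off the lower bound from the number of classes coprime to $b^3-b$.
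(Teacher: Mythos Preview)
The paper does not give its own proof of this lemma; it simply quotes the result as \cite[Lemma 9.1]{tuxanidy2024infinitude}. So there is nothing to compare against line by line, and your proposal should be judged on its own merits as a self-contained argument.

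Your outline is sound. The upper bound is immediate, and for the lower bound the key structural observation---that every even-length palindrome has alternating digit sum zero and hence is divisible by $b+1$---is exactly the right reason to restrict to odd length $N=2M+1$. The reduction of the three coprimality conditions to conditions on $n_0$, $S(n)\bmod(b-1)$ and $A(n)\bmod(b+1)$ is correct, as is the remark that the middle digit is the only free digit entering $S$ and $A$ with a unit coefficient.

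The one place where the write-up would need real work is the ``steering'' step. As stated, varying $(n_M,n_j)$ does not literally sweep out all of $\mathbb{Z}/(b-1)\times\mathbb{Z}/(b+1)$ (already modulo $b+1$ a single digit only supplies $b$ of the $b+1$ residues), so the claim that a positive fraction land in the $\varphi(b-1)\varphi(b+1)$ good classes needs an actual counting argument rather than a surjectivity assertion. Your ``softer alternative''---showing directly that the map $(n_0,\ldots,n_M)\mapsto n\bmod(b^2-1)$ is approximately equidistributed over the unit classes---is the cleaner route and avoids the parity case-split entirely; I would lead with that version if you write this out.
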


Next we state Tuxanidy and Panario's equidistribution estimate.

\begin{lemma}[{\cite[Theorem 1.5]{tuxanidy2024infinitude}}]\label{equilem}
    For any $b\geq 2$ and $\varepsilon>0$, there exists $\sigma(b,\varepsilon)>0$ such that
    \begin{equation}\label{equieq1}
        \sum_{\substack{d\leq x^{1/5-\varepsilon}\\(d,b^3-b)=1}}\sup_{y\leq x}\max_{a\in\mathbb{Z}}\left|\sum_{n\in\mathcal{P}_b^*(y)}\left(\mathbb{1}_{n\equiv a \mathrm{(}\mathrm{mod}\ d\mathrm{)}}-\frac{1}{d}\right)\right|\ll_{b,\varepsilon}\frac{|\mathcal{P}_b^*(x)|}{e^{\sigma(b,\varepsilon)\sqrt{\log x}}}.
    \end{equation}
\end{lemma}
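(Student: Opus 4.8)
Since the statement is quoted verbatim as Theorem~1.5 of Tuxanidy and Panario \cite{tuxanidy2024infinitude}, the in-paper justification is simply to cite it and use it as a black box. If instead I had to establish such an equidistribution estimate from scratch, my plan would be to run the circle method on the base-$b$ digit structure of palindromes. First I would detect the congruence $n\equiv a\pmod d$ by additive characters, writing
\begin{equation*}
  \mathbb{1}_{n\equiv a\,(\mathrm{mod}\ d)}=\frac{1}{d}\sum_{0\leq h<d}e\!\left(\frac{h(n-a)}{d}\right),
\end{equation*}
so that the frequency $h=0$ produces exactly the main term $|\mathcal{P}_b^*(y)|/d$ subtracted off in \eqref{equieq1}, and the quantity to be bounded becomes $\frac1d\sum_{0<h<d}e(-ha/d)\sum_{n\in\mathcal{P}_b^*(y)}e(hn/d)$.

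The key structural input is that a palindrome factorises over its free digits: writing, say for even $N$, an $N$-digit palindrome as $n=\sum_{i}\epsilon_i\,(b^i+b^{N-1-i})$ with independent digits $\epsilon_i\in\{0,\dots,b-1\}$, the character sum splits as
\begin{equation*}
  \sum_{n}e\!\left(\frac{hn}{d}\right)=\prod_{i}\left(\sum_{\epsilon=0}^{b-1}e\!\left(\frac{h\epsilon(b^i+b^{N-1-i})}{d}\right)\right),
\end{equation*}
each factor being a geometric sum of modulus $\min\!\big(b,\tfrac{1}{2\lVert h(b^i+b^{N-1-i})/d\rVert}\big)$, where $\lVert\cdot\rVert$ denotes distance to the nearest integer. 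The coprimality condition $(d,b^3-b)=1$ is exactly what I would use here: it makes $b$ invertible modulo $d$ and keeps $b^2\not\equiv 1$, so the residues $b^i+b^{N-1-i}\pmod d$ do not degenerate and, for each non-principal $h$, a positive proportion of digit positions contribute a factor bounded away from $b$, forcing genuine cancellation in every sum with $h\not\equiv0$. The remaining defining conditions of $\mathcal{P}_b^*$ (leading digit coprime to $b$, and $(n,b^2-1)=1$) I would fold in by further characters modulo $b^3-b$, using \eqref{b2m1iff}.

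Finally I would sum over $0<h<d$ and over $d\leq x^{1/5-\varepsilon}$, taking the supremum over $y\leq x$ and the maximum over $a$. Controlling the total reduces to counting the ``resonant'' frequencies $h$ for which too few digit factors are small, which a large-sieve or direct counting argument handles; the admissible exponent $1/5-\varepsilon$ and the error $e^{-\sigma\sqrt{\log x}}$ then emerge from optimising the number of free digits against the size of the modulus.

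I expect the main obstacle to be this last step: securing cancellation in the digit exponential sums that is simultaneously uniform in the modulus $d$ and the residue $a$, and strong enough to survive summation all the way up to the level of distribution $x^{1/5-\varepsilon}$. Although the reversal constraint lets the sum factorise over the free digits, it couples the high and low halves of $n$ through the coefficients $b^i+b^{N-1-i}$, so one must understand their distribution modulo $d$ well enough to guarantee that the resonant frequencies are sparse. This uniform, high-level-of-distribution control is the technical heart of \cite{tuxanidy2024infinitude}, and reproducing it is what I would expect to be genuinely hard.
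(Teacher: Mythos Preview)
Your proposal is correct: the paper treats this lemma exactly as you describe, quoting it from \cite{tuxanidy2024infinitude} as a black box with no in-paper proof. Your additional sketch of a from-scratch argument is a reasonable outline of the Fourier-analytic strategy behind such equidistribution results, but it goes beyond anything the present paper supplies.
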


Here, although not explicitly stated in \cite{tuxanidy2024infinitude}, the constant $\sigma(b,\varepsilon)$ is effectively computable. Now, for our purposes, we will only need the following consequence of Lemma \ref{equilem}.

\begin{lemma}\label{equilem2}
    For any $b\geq 2$, $k\geq 1$ and $\varepsilon>0$, there exists $\sigma(b,\varepsilon)>0$ such that 
    \begin{align}\label{equieq2}
        \sum_{\substack{d\leq x^{(1/5-\varepsilon)/k}\\(d,b^3-b)=1}}\mu(d)\sum_{n\in\mathcal{P}_b^*(y)}\left(\mathbb{1}_{n\equiv a \mathrm{(}\mathrm{mod}\ d^k\mathrm{)}}-\frac{1}{d^k}\right)\ll_{b,\varepsilon}\frac{|\mathcal{P}_b^*(x)|}{e^{\sigma(b,\varepsilon)\sqrt{\log x}}}.
    \end{align}
\end{lemma}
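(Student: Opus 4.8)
The plan is to deduce Lemma \ref{equilem2} directly from Lemma \ref{equilem} by a change of summation variable in the modulus, combined with the trivial bound $|\mu(d)|\leq 1$. Crucially, Lemma \ref{equilem} controls a \emph{sum of absolute discrepancies} (a Bombieri--Vinogradov--type level-of-distribution estimate), so no cancellation from the M\"obius function is needed; this is what makes the reduction painless. I would first fix an arbitrary $y\leq x$ and $a\in\mathbb{Z}$ and bound the left-hand side of \eqref{equieq2} uniformly in these parameters, so that the stated estimate holds for every such $y$ and $a$.

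The first step is to apply the triangle inequality and discard $\mu(d)$, using $|\mu(d)|\leq 1$:
\begin{equation*}
    \left|\sum_{\substack{d\leq x^{(1/5-\varepsilon)/k}\\(d,b^3-b)=1}}\mu(d)\sum_{n\in\mathcal{P}_b^*(y)}\left(\mathbb{1}_{n\equiv a\ (\mathrm{mod}\ d^k)}-\frac{1}{d^k}\right)\right|\leq\sum_{\substack{d\leq x^{(1/5-\varepsilon)/k}\\(d,b^3-b)=1}}\left|\sum_{n\in\mathcal{P}_b^*(y)}\left(\mathbb{1}_{n\equiv a\ (\mathrm{mod}\ d^k)}-\frac{1}{d^k}\right)\right|.
\end{equation*}
For each $d$ in this range I would then bound the inner absolute value by its supremum over $y\leq x$ and maximum over $a\in\mathbb{Z}$, so that the $d$-th term becomes exactly the term associated to the modulus $d^k$ in the sum \eqref{equieq1} of Lemma \ref{equilem}.

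The key step is the change of variable $e:=d^k$. Since $d\mapsto d^k$ is injective on positive integers and $(d,b^3-b)=1$ if and only if $(d^k,b^3-b)=1$, as $d$ ranges over the integers with $d\leq x^{(1/5-\varepsilon)/k}$ and $(d,b^3-b)=1$, the moduli $d^k$ form a subset of the integers $e\leq x^{1/5-\varepsilon}$ coprime to $b^3-b$. Because every summand in \eqref{equieq1} is non-negative, restricting the sum to the subset $\{e=d^k\}$ can only decrease it. Hence the right-hand side above is at most the full sum in \eqref{equieq1}, and applying Lemma \ref{equilem} yields the claimed bound $\ll_{b,\varepsilon}|\mathcal{P}_b^*(x)|/e^{\sigma(b,\varepsilon)\sqrt{\log x}}$, with the same admissible constant $\sigma(b,\varepsilon)$.

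This deduction is entirely routine and I do not anticipate a genuine obstacle; the only point requiring care is the matching of the exponent ranges. Specifically, one must check that the truncation $d\leq x^{(1/5-\varepsilon)/k}$ is precisely what guarantees $d^k\leq x^{1/5-\varepsilon}$, so that the moduli $d^k$ remain within the admissible range of Lemma \ref{equilem} and the injectivity/subset comparison is valid.
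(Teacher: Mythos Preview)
Your proof is correct and follows essentially the same route as the paper: bound $|\mu(d)|\leq 1$, observe that the moduli $d^k$ with $d\leq x^{(1/5-\varepsilon)/k}$ and $(d,b^3-b)=1$ form a subset of the moduli $e\leq x^{1/5-\varepsilon}$ with $(e,b^3-b)=1$, and then apply Lemma~\ref{equilem}. The paper compresses this into a single sentence, but the underlying argument is identical to yours.
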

\begin{proof}
    The left-hand side of \eqref{equieq2} is bounded by the left-hand side of \eqref{equieq1} (replacing $d$ with $d^k$) so that the desired bound follows.
\end{proof}
Finally, we need the following variant of a Brun--Titchmarsh style result due to Banks and Shparlinski \cite[Theorem 7]{banks2005prime}.
\begin{lemma}\label{brunlem}
    For all $b\geq 2$ and $d\in\mathbb{Z}_{>0}$, we have
    \begin{align}
        \sum_{\substack{n\in\mathcal{P}_b(x)\\d\mid n}}1&\ll_b \frac{|\mathcal{P}_b(x)|}{\sqrt{d}},\label{bruneq1}\\
        \sum_{\substack{n\in\mathcal{P}_b^*(x)\\d\mid n}}1&\ll_b \frac{|\mathcal{P}_b^*(x)|}{\sqrt{d}}\label{bruneq2}.
    \end{align}
\end{lemma}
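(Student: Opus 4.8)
The plan is to prove \eqref{bruneq1} directly and then to deduce \eqref{bruneq2} from it with essentially no extra work. For the reduction, note that $(n,b^3-b)=1$ forces $(n,b)=1$ and in particular $b\nmid n$, so $\mathcal{P}_b^*(x)\subseteq\mathcal{P}_b(x)$. Restricting the divisor sum to the smaller set can only decrease it, whence
\[
\sum_{\substack{n\in\mathcal{P}_b^*(x)\\ d\mid n}}1\leq\sum_{\substack{n\in\mathcal{P}_b(x)\\ d\mid n}}1\ll_b\frac{|\mathcal{P}_b(x)|}{\sqrt d}.
\]
Since \eqref{Pbasym} and Lemma \ref{pbstarlem} give $|\mathcal{P}_b(x)|\asymp_b\sqrt{x}\asymp_b|\mathcal{P}_b^*(x)|$, the right-hand side is $\ll_b|\mathcal{P}_b^*(x)|/\sqrt d$, which is \eqref{bruneq2}. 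Thus everything reduces to \eqref{bruneq1}.

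To establish \eqref{bruneq1}, I would argue length by length. For each $N$ with $b^{N-1}\leq x$, an $N$-digit palindrome is determined by its leading $m=\lceil N/2\rceil$ digits $n_0,\dots,n_{m-1}$, and since its first and last digits coincide the condition $b\nmid n$ is automatic. Writing $n=\sum_{i=0}^{m-1}n_i(b^{i}+b^{N-1-i})$ (with an extra middle term when $N$ is odd), the requirement $d\mid n$ becomes a single linear congruence $\sum_i n_i c_i\equiv 0\pmod d$ in the free digits $n_i\in\{0,\dots,b-1\}$, where $c_i=b^i+b^{N-1-i}$. Splitting $d=d_1d_2$ with $d_1$ supported on the primes dividing $b$ and $(d_2,b)=1$, the factor $d_1$ only constrains the lowest digits, through $n\bmod b^s$ for a suitable $s$, and contributes a saving of $1/d_1\leq 1/\sqrt{d_1}$; the coprime factor $d_2$ is where the palindromic structure bites and yields the square-root saving $1/\sqrt{d_2}$. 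Summing the per-length bound $\ll_b b^{m}/\sqrt d$ over $N\leq N_0\asymp\log_b x$ costs only a bounded factor, since the counts form a geometric progression dominated by its top term $b^{\lceil N_0/2\rceil}\asymp_b\sqrt{x}$. This is precisely the content of Banks and Shparlinski's Brun--Titchmarsh estimate \cite[Theorem 7]{banks2005prime}, whose argument carries over to general base $b$ with only notational changes, so after the reduction above I would simply quote it.

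The main obstacle, and the reason the saving is $1/\sqrt d$ rather than $1/d$, is the coupling introduced by the reversal: each free digit $n_i$ appears in $n$ both in the low position $b^i$ and in the high position $b^{N-1-i}$, so the congruence $\sum_i n_i c_i\equiv 0\pmod{d_2}$ cannot be resolved one digit at a time as in the classical count of solutions to a linear congruence. The Banks--Shparlinski device is to split the block of free digits into a low sub-block and a high sub-block of size about $\log_b\sqrt{d_2}$, so that each sub-block pins down $n$ only modulo roughly $\sqrt{d_2}$; summing the resulting coupled count over the remaining free digits then produces the $b^{m}/\sqrt{d_2}\asymp_b\sqrt{x}/\sqrt d$ bound. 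In adapting their result I would need to confirm that the estimate is uniform in $d$ and valid in the single residue class $0\pmod d$ used here; this is no difficulty, since it is in fact proved uniformly over all residue classes, which is a stronger statement than we require.
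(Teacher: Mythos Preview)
Your proposal is correct and follows essentially the same route as the paper: quote the Banks--Shparlinski per-length bound $|P_b(N;d)|\ll_b b^{N/2}/\sqrt{d}$, sum geometrically over $N\leq\lceil\log_b x\rceil$, and convert $\sqrt{x}$ to $|\mathcal{P}_b(x)|$ (resp.\ $|\mathcal{P}_b^*(x)|$) via \eqref{Pbasym} and Lemma~\ref{pbstarlem}. Your explicit reduction of \eqref{bruneq2} to \eqref{bruneq1} via the inclusion $\mathcal{P}_b^*(x)\subseteq\mathcal{P}_b(x)$ is a minor stylistic variation on the paper's ``follows identically'', but the substance is the same.
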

\begin{proof}
    Let $P_b(N;d)$ denote the set of base-$b$ palindromes with $N$ digits that are divisible by $d$. Analagous to \eqref{Pbasym}, we have (see e.g.\ \cite[Lemma 3]{kobayashi2023counting})
    \begin{equation}\label{PbN1asym}
        P_b(N;1)\asymp_b b^{N/2}.
    \end{equation}
    Hence, in our notation, \cite[Theorem 7]{banks2005prime} is equivalent to
    \begin{equation*}
        |P_b(N;d)|\ll_b\frac{b^{N/2}}{\sqrt{d}}.
    \end{equation*}
    As a result,
    \begin{equation*}
        \sum_{\substack{n\in\mathcal{P}_b(x)\\d\mid n}}1\leq\sum_{1\leq N\leq\lceil\log_b(x)\rceil}P_b(N;d)\ll_b\frac{1}{\sqrt{d}}\sum_{1\leq N\leq\lceil\log_b(x)\rceil}b^{N/2}\ll_b\frac{\sqrt{x}}{\sqrt{d}}
    \end{equation*}
    so that \eqref{bruneq1} follows from \eqref{Pbasym}. The bound \eqref{bruneq2} follows identically using Lemma~\ref{pbstarlem}.
\end{proof}

Using the above lemmas, we now prove Theorem \ref{palinthm}.
\begin{proof}[Proof of Theorem \ref{palinthm}]
    We proceed in a similar fashion to the proof of Theorem \ref{revthm}. In particular, we begin by writing
    \begin{equation}\label{p4bfirstsum}
        p_{k,b}(x)=\sum_{n\in\mathcal{P}_b^*(x)}\sum_{d^k\mid n}\mu(d).
    \end{equation}
    Fix $\varepsilon<1/5$ (say $\varepsilon=1/10$). We split \eqref{p4bfirstsum} into $S_1(x)+S_2(x)$, with
    \begin{align}
        S_1(x)&=\sum_{\substack{d\leq x^{(1/5-\varepsilon)/k}\\(d,b^3-b)=1}}\sum_{\substack{n\in\mathcal{P}_b^*(x)\\d^k\mid n}}\mu(d),\label{s1def}\\
        S_2(x)&=\sum_{\substack{d>x^{(1/5-\varepsilon)/k}\\(d,b^3-b)=1}}\:\sum_{\substack{n\in\mathcal{P}_b^*(x)\\d^k\mid n}}\mu(d).\label{s2def}
    \end{align}
    Here, the condition $(d,b^3-b)=1$ has been vacuously added, noting that every $n\in\mathcal{P}_b^*(x)$ satisfies $(n,b^3-b)=1$. Now, Lemma \ref{equilem2} with $c_2(b):=\sigma(b,\varepsilon)$ gives
    \begin{align}
        S_1(x)&=\sum_{\substack{d\leq x^{(1/5-\varepsilon)/k}\\(d,b^3-b)=1}}\mu(d)\frac{|\mathcal{P}_b^*(x)|}{d^k}+O_b\left(\frac{|\mathcal{P}_b^*(x)|}{e^{c_2(b)\sqrt{\log x}}}\right)\notag\\
        &=|\mathcal{P}_b^*(x)|\left(\sum_{(d,b^3-b)=1}\frac{\mu(d)}{d^k}-\sum_{\substack{d>x^{(1/5-\varepsilon)/k}\\(d,b^3-b)=1}}\frac{\mu(d)}{d^k}\right)+O_{b}\left(\frac{|\mathcal{P}_b^*(x)|}{e^{c_2(b)\sqrt{\log x}}}\right).\label{s1expand}
    \end{align}
    By converting to an Euler product,
    \begin{equation}\label{s1sub1}
        \sum_{(d,b^3-b)=1}\frac{\mu(d)}{d^k}=\frac{1}{\zeta(k)}\prod_{p\mid b^3-b}\left(1-\frac{1}{p^k}\right)^{-1}.
    \end{equation}
    Then,
    \begin{equation}\label{s1sub2}
        \left|\sum_{\substack{d>x^{(1/5-\varepsilon)/k}\\(d,b^3-b)=1}}\frac{\mu(d)}{d^k}\right|\leq\sum_{d>x^{(1/5-\varepsilon)/k}}\frac{1}{d^k}\ll\frac{1}{x^{\frac{k-1}{k}(1/5-\varepsilon)}}\ll\frac{1}{x^{\frac{1}{2}(1/5-\varepsilon)}}.
    \end{equation}
    Substituting \eqref{s1sub1} and \eqref{s1sub2} into \eqref{s1expand} yields
    \begin{equation}\label{s1asympeq}
        S_1(x)=\frac{|\mathcal{P}_b^*(x)|}{\zeta(k)}\prod_{p\mid b^3-b}\left(1-\frac{1}{p^k}\right)^{-1}\left(1+O_{b}\left(\frac{1}{e^{c_2(b)\sqrt{\log x}}}\right)\right),
    \end{equation}
    which is the asymptotic expression we wish to prove for $p_{k,b}(x)$. Hence, to finish, it suffices to show that $S_2(x)$ can be absorbed into the error term of \eqref{p4beq}. This can be seen by applying \eqref{bruneq2} of Lemma \ref{brunlem}:
    \begin{align}\label{s2boundeq}
        |S_2(x)|\leq \sum_{\substack{d>x^{(1/5-\varepsilon)/k}\\(d,b^3-b)=1}}\:\sum_{\substack{n\in\mathcal{P}_b^*(x)\\d^k\mid n}}1\leq \sum_{\substack{d>x^{(1/5-\varepsilon)/k}\\(d,b^3-b)=1}}\frac{|\mathcal{P}_b^*(x)|}{d^{k/2}}\ll\frac{|\mathcal{P}_b^*(x)|}{x^{\frac{1}{6}(1/5-\varepsilon)}}.
    \end{align}
    where we have used that $k\geq 3$. The right-most bound in \eqref{s2boundeq} is smaller than the error term in \eqref{s1asympeq} so that the desired asymptotic for $p_{k,b}(x)$ follows.
\end{proof}

Now, to prove Theorem \ref{palinthm2}, we first give the precise statement of Tuxanidy and Panario's result on almost-prime palindromes. 
\begin{lemma}[{\cite[Theorem 1.4]{tuxanidy2024infinitude}}]\label{tuxthm}
    Let $b\geq 2$. Then
    \begin{equation*}
        \#\{n\in\mathcal{P}_b(x):\Omega(n)\leq 6,\: P^-(n)\geq x^{1/21}\}\asymp_b\frac{|\mathcal{P}_b(x)|}{\log x}.
    \end{equation*}
    where $\Omega(n)$ is the total number of prime factors of $n$, and $P^-(n)$ denotes the smallest prime factor of $n$.
\end{lemma}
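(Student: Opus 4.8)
This is an almost-prime result, so the plan is to run a weighted sieve on the palindrome sequence, using the equidistribution estimate of Lemma~\ref{equilem} as the arithmetic input governing the level of distribution. First I would reduce to the coprime set. Once $x$ is large enough that every prime dividing $b^3-b$ lies below $x^{1/21}$, any $n$ with $P^-(n)\geq x^{1/21}$ is automatically coprime to $b^3-b$, so the set being counted is a subset of $\mathcal{P}_b^*(x)$; moreover $|\mathcal{P}_b^*(x)|\asymp_b|\mathcal{P}_b(x)|\asymp_b\sqrt{x}$ by Lemma~\ref{pbstarlem} and \eqref{Pbasym}. Hence it is harmless to sieve the sequence $\mathcal{A}=\mathcal{P}_b^*(x)$ and to replace $|\mathcal{P}_b(x)|$ by $|\mathcal{P}_b^*(x)|$ throughout. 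For squarefree $d$ with $(d,b^3-b)=1$, Lemma~\ref{equilem} gives $\#\{n\in\mathcal{P}_b^*(x):d\mid n\}=|\mathcal{P}_b^*(x)|/d+r_d$ with $\sum_{d\leq D}|r_d|$ small for $D=x^{1/5-\varepsilon}$; since the local density is $\omega(p)=1$ for $p\nmid b^3-b$, the problem is \emph{linear} (sieve dimension $\kappa=1$), and the finitely many primes dividing $b^3-b$ are simply dropped from the sieve support.

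For the lower bound I would apply a weighted sieve of Richert / Diamond--Halberstam--Richert type to $\mathcal{A}$ with sieving level $z=x^{1/21}$ and a weight parameter $y=x^{\eta}$ chosen to optimise the numerology. Taking $z=x^{1/21}$ forces $P^-(n)\geq x^{1/21}$ on the surviving $n$, while $s=\log D/\log z=(1/5-\varepsilon)\cdot 21>2$ keeps the lower-bound linear-sieve function $f(s)$ positive. The weights subtract a fixed amount for each prime factor of $n$ lying in $[z,y)$, so that any $n$ retaining positive weight has $\Omega(n)\leq r$; with level exponent $\tau=1/5$ the relevant numerology $r\approx 1/\tau+1$ makes $r=6$ attainable. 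Carrying this through yields $\#\{n\in\mathcal{P}_b(x):\Omega(n)\leq 6,\ P^-(n)\geq x^{1/21}\}\gg_b|\mathcal{P}_b(x)|/\log x$.

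The matching upper bound is the easier direction. Applying a standard upper-bound (Selberg or upper linear) sieve to $\mathcal{P}_b^*(x)$ with level $z=x^{1/21}$ bounds the count of palindromes with $P^-(n)\geq x^{1/21}$, a superset of the target set, by $\ll_b|\mathcal{P}_b^*(x)|\prod_{p<z,\,p\nmid b^3-b}(1-1/p)\asymp_b|\mathcal{P}_b(x)|/\log z\asymp_b|\mathcal{P}_b(x)|/\log x$ via Mertens' theorem; alternatively one can read this off from a Brun--Titchmarsh input in the spirit of Lemma~\ref{brunlem}. Combining the two directions gives the claimed $\asymp_b$.

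The main obstacle is the weighted-sieve numerology: one must select the weight parameter(s) and track the sieve functions $F$ and $f$ precisely enough to close the argument at exactly $\Omega(n)\leq 6$ and $P^-(n)\geq x^{1/21}$ at level $1/5-\varepsilon$, while keeping the main term positive and of order $|\mathcal{P}_b(x)|/\log x$. The key structural point making this feasible is that Lemma~\ref{equilem} supplies average control of the remainders $r_d$ over the full range $d\leq x^{1/5-\varepsilon}$ with $(d,b^3-b)=1$, so that all sieve error terms are absorbed; verifying the dimension-one local densities and the positivity of the weighted main term at the chosen parameters is the delicate part.
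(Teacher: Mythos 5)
The first thing to say is that the paper does not prove this statement at all: Lemma~\ref{tuxthm} is imported verbatim as Theorem 1.4 of \cite{tuxanidy2024infinitude} and used as a black box in the proof of Theorem~\ref{palinthm2}, so there is no internal proof to compare your attempt against. What you have written is in effect a reconstruction of the argument in the cited source, and it is the right roadmap: Tuxanidy and Panario do prove their Theorem 1.4 by feeding the equidistribution estimate reproduced here as Lemma~\ref{equilem} (level of distribution $1/5-\varepsilon$ for $\mathcal{P}_b^*$, moduli coprime to $b^3-b$, remainders controlled on average by $|\mathcal{P}_b^*(x)|e^{-\sigma\sqrt{\log x}}$, which is far more than the $X/\log^A x$ a sieve needs) into a dimension-one weighted sieve, with the primes dividing $b^3-b$ dropped from the sieve support exactly as you say. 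Your preliminary reductions are sound: for fixed $b$ and $x$ large, $P^-(n)\geq x^{1/21}$ forces $(n,b^3-b)=1$, so the target set sits inside $\mathcal{P}_b^*(x)$, and $|\mathcal{P}_b^*(x)|\asymp_b|\mathcal{P}_b(x)|\asymp_b\sqrt{x}$ by Lemma~\ref{pbstarlem} and \eqref{Pbasym}; the upper bound via an upper linear sieve at level $z=x^{1/21}$ together with Mertens' theorem (or a Brun--Titchmarsh input in the spirit of Lemma~\ref{brunlem}) gives $\ll_b|\mathcal{P}_b(x)|/\log x$ without difficulty.

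The one place where your write-up is a sketch rather than a proof is the one you flag yourself: the weighted-sieve numerology. The heuristic $r\approx 1/\tau+1$ is not a theorem; the precise Richert/Diamond--Halberstam--Richert condition in dimension one involves the sieve functions $f$ and $F$, the ratio between the size of the sifted elements (here $x$) and the level ($x^{1/5-\varepsilon}$), and the choice of weight cutoffs, and it is exactly this optimisation that produces the specific pair $\Omega(n)\leq 6$, $P^-(n)\geq x^{1/21}$ in \cite{tuxanidy2024infinitude}. Your observation that $s=(1/5-\varepsilon)\cdot 21>2$ keeps $f(s)>0$ only secures the unweighted lower bound, i.e.\ $\gg_b|\mathcal{P}_b(x)|/\log x$ palindromes with $P^-(n)\geq x^{1/21}$ and hence only the trivial $\Omega(n)\leq 20$; the descent from $20$ to $6$ lives entirely in the weights, so the positivity of the weighted main term at these exact parameters must be verified rather than asserted. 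Since the present paper only ever uses the lemma as quoted, citing \cite{tuxanidy2024infinitude} (as the paper does) is the correct move; a self-contained proof would require carrying out precisely that verification.
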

\begin{proof}[Proof of Theorem \ref{palinthm2}]
    By Lemma \ref{tuxthm}, it suffices to prove that
    \begin{equation}\label{littleoeq}
        \#\{n\in\mathcal{P}_b(x):P^{-}(n)\geq x^{1/21},\: \exists d\in\mathbb{Z}_{>0}\ \text{with}\ d^3\mid n\}=o\left(\frac{|\mathcal{P}_b(x)|}{\log x}\right).
    \end{equation}
    This follows by an application of Lemma \ref{brunlem}. Namely, by \eqref{bruneq1}, the left-hand side of \eqref{littleoeq} is bounded above by
    \begin{equation*}
        \sum_{d\geq x^{1/21}}\sum_{\substack{n\in\mathcal{P}_b(x)\\d^3\mid n}}1\ll_b\sum_{d\geq x^{1/21}}\frac{|\mathcal{P}_b(x)|}{d^{3/2}}\ll\frac{|\mathcal{P}_b(x)|}{x^{1/42}}=o\left(\frac{|\mathcal{P}_b(x)|}{\log x}\right).
    \end{equation*}
    as required.
\end{proof}

\section{Further discussion}\label{discsect}
\subsection{Possible improvements}
In order to prove Conjecture \ref{revcon2}, one would have to increase the valid range of bases $b\geq 26000$ in Theorem \ref{revthm}. Certainly, one could expand on our computational argument in the appendix. However, we performed some rough calculations and found this method limits out around $b=25960$, so that no significant improvement is possible. Thus, to lower the base further, one would need to introduce new analytic techniques. For example, one could try to adapt some of the methods in \cite{maynard2019primes}, which were used to detect primes with restricted digits in base $b=10$ (also see the discussion in Section \ref{recentworksect}).

A proof of Conjecture \ref{palincon2} also appears within reach, as our proof of Theorem \ref{palinthm} falls just short of being able to detect square-free palindromes. In particular, if the bound in Lemma \ref{brunlem} could be improved to
\begin{equation*}
    \sum_{\substack{n\in\mathcal{P}_b^*(x)\\d\mid n}}1\ll_b \frac{|\mathcal{P}_b^*(x)|}{d^{1/2+\delta}}
\end{equation*}
for any $\delta>0$, then Conjecture \ref{palincon2} (and an analogous improvement to Theorem \ref{palinthm2}) would follow by the same argument as the proof of Theorem \ref{palinthm}. In addition, if one could improve upon Tuxanidy and Panario's method in \cite{tuxanidy2024infinitude} to prove the existence of infinitely many palindromes with at most 2 prime factors, say with
\begin{equation}\label{2primeeq}
    \#\{n\in\mathcal{P}_b(x):\Omega(n)\leq 2\}\gg_b\frac{|\mathcal{P}_b(x)|}{\log x},
\end{equation}
then Conjecture \ref{palincon2} would also readily follow. Here, we note that $O(|\mathcal{P}_b(x)|^{1/2})$ palindromes are perfect squares \cite[Theorem 1]{cilleruelo2009power}, which is of a much lower order than a bound of the form \eqref{2primeeq} or similar.

\subsection{Related problems}
In \cite{dartyge2024reversible}, Dartyge et al.\ prove that for infinitely many {$n\in\mathcal{B}_N$}, one has\footnote{In fact, Tuxanidy and Panario's result \cite{tuxanidy2024infinitude} gives an even stronger result, namely infinitely many palindromes $n$ with $\Omega(n)=\Omega(\rev{n})\leq 6$. However, we state Dartyge et. al's result here as it is more targeted to the problem at hand and gives a better estimate for the number of $n\in\mathcal{B}_N$ satisfying \eqref{omeganneq1}.}
\begin{equation}\label{omeganneq1}
    \max\{\Omega(n),\Omega(\rev{n})\}\leq 8
\end{equation}
and thus
\begin{equation}\label{omeganneq2}
    \Omega(n\rev{n})\leq 16
\end{equation}
in base $b=2$. In a similar vein to our Theorem \ref{revthm}, one could try to prove an analogous result to \eqref{omeganneq2}, fixing $n$ to be prime.

\begin{question}\label{revq}
    Can one prove the existence of an integer $K>0$ such that for all, or a range of bases $b\geq 2$,
    \begin{equation*}
        \Omega(\rev{p})\leq K
    \end{equation*}
    for infinitely many primes $p$?
\end{question}

An affirmative answer to Question \ref{revq} would be a deeper result (and closer to Conjecture \ref{revcon}) than Theorem \ref{revthm}. This is due to the fact that the set of integers with a bounded number of prime factors has a natural density of 0, whereas a proportion of $6/\pi^2\approx 0.608$ numbers are square-free. To prove Question \ref{revq} using classical sieve methods, one would need an equidistribution result for reversed primes in arithmetic progressions, as opposed to the pointwise bound given by Telhcirid's theorem \cite[Theorem 1.1]{bhowmik2024telhcirid}.

The methods in this paper could also be used to study the following Goldbach-like conjecture.

\begin{conjecture}[Hcabdlog's conjecture]\label{hcabcon}
    Consider a base $b\geq 2$. If $b$ is odd or $b=2$, then every sufficiently large even number $N$ can be expressed as
    \begin{equation}\label{hcabeq}
        N=\rev{p_1}+p_2
    \end{equation}
    where $p_1$ and $p_2$ are prime. Otherwise, if $b>2$ is even then every sufficiently large number $N$ (even or odd) can be expressed in the form \eqref{hcabeq}. 
\end{conjecture}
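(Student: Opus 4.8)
The plan is to attack Conjecture \ref{hcabcon} via the Hardy--Littlewood circle method, taking Telhcirid's theorem (in the form of Lemma \ref{telhcirid2lem}, or its extension to all $b\geq 2$ in \cite{bhowmik2025zsiflaw} discussed in Section \ref{recentworksect}) as the arithmetic input on the reversed-prime side. Writing $\Lambda$ for the von Mangoldt function, I would study the weighted representation count
\begin{equation*}
    r(N)=\sum_{\rev{m}+n=N}\Lambda(m)\Lambda(n),
\end{equation*}
where $m$ ranges over integers with $\rev{m}\leq N$. Introducing
\begin{equation*}
    F(\alpha)=\sum_{\rev{m}\leq N}\Lambda(m)\,e(\alpha\rev{m}),\qquad G(\alpha)=\sum_{n\leq N}\Lambda(n)\,e(\alpha n),
\end{equation*}
with $e(x)=\exp(2\pi ix)$ as in the proof of Lemma \ref{telhcirid2lem}, orthogonality gives
\begin{equation*}
    r(N)=\int_0^1 F(\alpha)G(\alpha)e(-\alpha N)\,\mathrm{d}\alpha.
\end{equation*}
Note that $F$ is a genuine exponential sum over the set of reversed primes, weighted by $\Lambda(\rev{k})$ at $k=\rev{m}$, so that the coefficient sequence of $F$ is exactly the object whose distribution in arithmetic progressions Lemma \ref{telhcirid2lem} controls. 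One then dissects the unit interval into major arcs $\mathfrak{M}$ around rationals $a/q$ with $q$ small, and a minor-arc remainder $\mathfrak{m}$.

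On $\mathfrak{M}$ the contribution of $G$ is handled by the prime number theorem in arithmetic progressions, while the contribution of $F$ is governed by the count of reversed primes in residue classes supplied by Lemma \ref{telhcirid2lem}, via partial summation in the variable $\rev{m}$. Combining these yields a main term of the expected shape, a positive constant times a singular series $\mathfrak{S}(N)$ whose local factors encode the congruence obstructions modulo $b$ and $b^2-1$ exploited throughout the paper. The parity trichotomy in Conjecture \ref{hcabcon} should appear precisely as the condition $\mathfrak{S}(N)>0$: when $b$ is odd one has $\rev{m}\equiv m\pmod 2$, and when $b=2$ every leading digit is $1$ so $\rev{m}$ is forced odd, in both cases pinning the parity of $\rev{m}$ and hence forcing $N$ even; when $b>2$ is even the leading digit of $m$ runs over both parities, $\rev{m}$ realises either residue, and no parity restriction on $N$ survives. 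Checking that the relevant local factors are bounded away from zero under exactly these hypotheses is routine but must be done with care.

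The crux is the minor-arc estimate. For $G$ one has Vinogradov's classical bound, but one also needs nontrivial control of $F$ on $\mathfrak{m}$, either a pointwise bound for $\sup_{\alpha\in\mathfrak{m}}|F(\alpha)|$ or a suitable mean value. The exponential-sum machinery of Bhowmik and Suzuki underlying Telhcirid's theorem (\cite[\S8--11]{bhowmik2024telhcirid}) is the natural starting point, since it already treats sums of the form $\sum_{\rev{p}}e(h\rev{p}/d)$; adapting those estimates from rational points $h/d$ to arbitrary minor-arc $\alpha$ is the first technical hurdle. The deeper and, I expect, decisive obstacle is that \eqref{hcabeq} is a \emph{binary} additive problem: as with the classical binary Goldbach conjecture, pointwise minor-arc bounds cannot be pushed past the square-root barrier, so the circle method alone will not deliver \emph{every} sufficiently large $N$.

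Accordingly I would aim first for the almost-all version. Here one bounds the mean square $\sum_{N\leq X}|r(N)-M(N)|^2$, where $M(N)$ is the major-arc main term, by Bessel's inequality followed by $\int_{\mathfrak m}|F|^2|G|^2\,\mathrm{d}\alpha\leq\bigl(\sup_{\mathfrak m}|G|^2\bigr)\int_0^1|F|^2\,\mathrm{d}\alpha$, the last integral being $O(N\log N)$ by Parseval and the reversed-prime count (using that $\rev{\cdot}$ is an involution on each digit-length class). With Vinogradov's saving on $\sup_{\mathfrak m}|G|$, this should show that all but $o(X)$ of the admissible $N\leq X$ are representable as in \eqref{hcabeq}, and plausibly all but $O\bigl(X(\log X)^{-C}\bigr)$ with more effort on the minor arcs. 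Establishing the conjecture for every large $N$, however, would require an essentially new idea beyond the circle method, exactly as for binary Goldbach; this is the point at which the approach stalls.
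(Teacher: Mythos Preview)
The statement you are asked to prove is labelled a \emph{conjecture} in the paper, and indeed the paper makes no attempt to prove it. After stating Conjecture~\ref{hcabcon}, the authors merely (i) report a small numerical check in base~$10$, (ii) explain the parity dichotomy in the hypothesis (the same argument you sketch), and (iii) suggest, as a tractable approximation, counting representations $N=\rev{p}+\eta$ with $\eta$ square-free via Telhcirid's theorem, in analogy with Estermann's classical result. There is no ``paper's own proof'' to compare against.

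Your proposal is therefore not a proof but a research outline, and you are candid about this: you correctly identify that \eqref{hcabeq} is a binary additive problem and that the circle method cannot cross the square-root barrier on the minor arcs, so the full conjecture is out of reach by these means---exactly as for binary Goldbach. The almost-all result you describe is a plausible target, and your Parseval/Vinogradov sketch for the minor-arc $L^2$ bound is reasonable, though the adaptation of the Bhowmik--Suzuki exponential-sum estimates from rational $h/d$ to generic minor-arc $\alpha$ is nontrivial and would need to be carried out in full. Note also that the paper's suggested approximation (prime plus square-free) is strictly weaker than your almost-all target and avoids the minor-arc issue entirely, since it only requires the pointwise Telhcirid bound at rational points, mirroring Estermann rather than Vinogradov.

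In short: neither you nor the paper proves the conjecture; the paper does not claim to, and you rightly flag where your approach stalls.
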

In base 10, a simple computation yields that $N=11$ is the only exception to \eqref{hcabeq} for $4\leq N\leq 10^6$. We also give further explanation for the condition that $N$ must be even if $b$ is odd or $b=2$. Firstly, if $b=2$ then the reverse of an odd prime $p$ must also be odd since the leading digit of $p$ is always $1$. Thus, provided $p_1,p_2\neq 2$, the sum $\rev{p_1}+p_2$ is always even. On the other hand, suppose that $b$ is odd. Then, the parity of a number $n$ in base $b$ is determined by the sum of its digits, and the sum of the digits of $n$ is always equal to the sum of the digits of $\rev{n}$. So, one again has that $\rev{p}$ is odd for all odd primes $p$. 

As an approximation to Conjecture \ref{hcabcon}, one could apply a variant of Telhcirid's theorem as in the proof of Theorem \ref{revthm} to count representations of large $N$ as
\begin{equation}\label{hcabsfeq}
    N=\rev{p}+\eta,
\end{equation}
where $p$ is prime and $\eta$ is square-free. In particular, if $h_b(N)$ represents the number of representations of $N$ in the form $\eqref{hcabsfeq}$, then
\begin{equation*}
    h_b(N)=\sum_{p\in\mathcal{B}_N}\mu^2(N-\rev{p})=\sum_{d<\sqrt{N}}\sum_{\substack{p\in\mathcal{B}_N\\ \rev{p}\equiv N\ \text{(mod}\ d^2\text{)}}}\mu(d),
\end{equation*}
which directly depends on estimates for $\rev{\pi}(N,d)$. Obtaining an asymptotic expression for $h_b(N)$ would be analogous to Estermann's classical result on the sum of a prime and a square-free number \cite{estermann1931representations}, which was proven in relation to the standard Goldbach conjecture.

\begin{section*}[A]{Appendix: Refining Telhcirid's theorem}
\setcounter{equation}{0}
    In this appendix, we prove that the stated bound $b\geq 26000$ is admissible in Telhcirid's theorem and our variant in Lemma \ref{telhcirid2lem}. To do so, we note that in \cite{bhowmik2024telhcirid}, it is proven that an asymptotic for $\rev{\pi}_N(a,d)$, defined in \eqref{revpidef}, holds provided
    \begin{equation}\label{alphabeq}
        \alpha_b:=\frac{\log(C_b)}{\log b}<\frac{1}{5},
    \end{equation}
    where $C_b>0$ is any number such that
    \begin{equation}\label{Cbeq}
        f(\theta):=\sum_{0\leq h<b}\min\left(b,\frac{1}{\left|\sin\pi\left(\frac{h}{b}+\theta\right)\right|}\right)\leq C_bb
    \end{equation}
    uniformly for all $\theta\in\mathbb{R}$. Namely, \eqref{alphabeq} implies a sufficiently strong bound for the error term for $\rev{\pi}_N(a,d)$ in \cite[Theorem 1.1]{bhowmik2024telhcirid}. In \cite[Lemma 6]{bhowmik2024telhcirid}, an analytic expression for $C_b$ is provided which gives that \eqref{alphabeq} holds for all $b\geq 31699$. However, by utilising the periodicity of $f(\theta)$, it is possible to perform a moderate computation to expand the range of $b$. The relevant code is given in the Github repository \cite{djcode}.
    \begin{proposition}
        The condition \eqref{alphabeq} holds for all $b\geq 26000$.
    \end{proposition}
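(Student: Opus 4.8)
The plan is to take for $C_b$ the optimal constant $C_b=\max_{\theta\in\mathbb{R}}f(\theta)/b$, so that the task \eqref{alphabeq} reduces to verifying the single inequality $\max_{\theta}f(\theta)<b^{6/5}$ for every $b\ge 26000$. The first step is to observe that, for each fixed $b$, this is really a compact problem. Replacing $\theta$ by $\theta+1/b$ in \eqref{Cbeq} cyclically permutes the summands via $h\mapsto h+1$ (using that $|\sin\pi x|$ has period $1$), so $f(\theta+1/b)=f(\theta)$; and the substitution $h\mapsto b-h$, together with $|\sin\pi(1-t)|=|\sin\pi t|$, gives $f(-\theta)=f(\theta)$. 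Hence $f$ is even and $1/b$-periodic, and it suffices to bound it on the fundamental domain $\theta\in[0,1/(2b)]$. This is exactly the periodicity that turns \eqref{Cbeq} into a finite computation.

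For large $b$ I would simply defer to the existing analytic estimate: \cite[Lemma 6]{bhowmik2024telhcirid} already supplies an admissible value of $C_b$ for which \eqref{alphabeq} holds whenever $b\ge 31699$. It therefore remains only to handle the finite range $26000\le b<31699$, and for this I would certify $\max_{\theta\in[0,1/(2b)]}f(\theta)<b^{6/5}$ by direct rigorous computation for each integer $b$ in turn (checking every $b$, since $\alpha_b$ need not be monotone once the lower-order fluctuations of $\max_\theta f$ are taken into account).

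To make the maximisation over the continuous variable $\theta$ rigorous, I would partition $[0,1/(2b)]$ into finitely many cells and bound $f$ from above on each. The key structural point is that each summand $\min\!\bigl(b,\,1/|\sin\pi(h/b+\theta)|\bigr)$ is a decreasing function of the distance from $h/b+\theta$ to the nearest integer. Consequently its maximum over a cell is attained where that distance is smallest, namely at an endpoint of the cell, or at the capped value $b$ if $h/b+\theta$ meets an integer inside the cell. Summing these per-term cell maxima yields a rigorous upper bound for $f$ on the cell, and the largest such bound over all cells certifies an upper bound for $\max_\theta f(\theta)$; refining the partition until this certified bound drops below $b^{6/5}$ completes the verification for that $b$. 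The implementing code is provided in \cite{djcode}.

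The main obstacle is that the inequality is genuinely delicate at the bottom of the range. One might guess that $f$ is maximised at $\theta=0$, where the $h=0$ term saturates the cap $b$; but for small $\theta>0$ this term stays capped on $[0,\tfrac1\pi\arcsin\tfrac1b]$, while each pair $\{h,b-h\}$ contributes $1/|\sin\pi(h/b+\theta)|+1/|\sin\pi(h/b-\theta)|$, which by convexity of $t\mapsto 1/\sin\pi t$ on $(0,1)$ strictly exceeds its value at $\theta=0$. Thus the true maximiser lies slightly off $0$ and $\max_\theta f(\theta)$ sits just above $f(0)$, leaving only a razor-thin margin below $b^{6/5}$ near $b=26000$ (consistent with the method running out around $b\approx 25960$). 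The certified upper bound must therefore be computed to high precision with explicit control of rounding error, for instance via interval arithmetic, and the partition refined finely near the maximiser; keeping the verification simultaneously rigorous and efficient across all $\sim 5700$ values of $b$ is the crux of the argument.
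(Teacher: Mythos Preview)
Your proposal is correct and follows essentially the same approach as the paper: invoke \cite[Lemma~6]{bhowmik2024telhcirid} for $b\ge 31699$, use the $1/b$-periodicity of $f$ to reduce to a compact fundamental domain, partition that domain into cells, bound each summand $f_h$ on a cell by its endpoint values, and verify $\max_\theta f(\theta)<b^{6/5}$ numerically for the remaining $b$ via the code in \cite{djcode}. The only minor differences are that you additionally exploit the evenness $f(-\theta)=f(\theta)$ to halve the fundamental domain to $[0,1/(2b)]$, and your justification for the endpoint maximum (monotonicity in distance to the nearest integer) is somewhat slicker than the paper's case analysis based on the width $L_b$ of the capped plateau; neither changes the substance of the argument.
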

    \begin{proof}
    The case $b\geq 31699$ is proven in \cite{bhowmik2024telhcirid}, so we restrict to $26000\leq b\leq 31698$. Let
    \begin{equation*}
        f_h(\theta)= \min\left(b,\frac{1}{\left|\sin\left(\pi\left(\frac{h}{b}+\theta\right)\right)\right|}\right)
    \end{equation*}
    so that
    \begin{equation*}
        f(\theta)=\sum_{0\leq h<b}f_h(\theta).
    \end{equation*}
    Combining \eqref{alphabeq} and \eqref{Cbeq}, we aim to show that for $26000\leq b<31698$ and all $\theta\in\mathbb{R}$, we have
    \begin{equation}\label{fthetabeq}
        f(\theta)<b^{6/5}.
    \end{equation}
    First we note that it suffices to consider $\theta\in[0,1/b]$ since if $\theta'=\theta+1/b$ then
    \begin{equation*}
        f(\theta')=\sum_{0\leq h<b}f_{h}(\theta')=\sum_{0\leq h<b}f_{h+1}(\theta)=f(\theta)
    \end{equation*}
    by the $1$-periodicity of $|\sin(\pi x)|$. We now divide the interval $[0,1/b]$ into $K\geq 2$ segments of length $1/Kb$:
    \begin{equation*}
        S_i=\left[\frac{i}{Kb},\frac{i+1}{Kb}\right],\quad i=0,1,\ldots,K-1.
    \end{equation*}
    For each value of $i$ and $h$, we then note that 
    \begin{equation}\label{fhmaxeq}
        \max_{\theta\in S_i}f_h(\theta)=\max\left\{f_h\left(\frac{i}{Kb}\right),f_h\left(\frac{i+1}{Kb}\right)\right\}.
    \end{equation}
    To see why \eqref{fhmaxeq} holds, we refer to Figure \ref{fig:DesmosLb}, which shows the general structure of the function $f_h(x)$. In particular, $f_h(x)$ is a uniform sequence of concave up arches, connected by straight line segments at height $b$ and of length
    \begin{equation*}
        L_b:=\frac{2}{\pi}\arcsin\left(\frac{1}{b}\right).
    \end{equation*}
    \begin{figure}[t]
        \centering
        \includegraphics[width=0.6\textwidth]{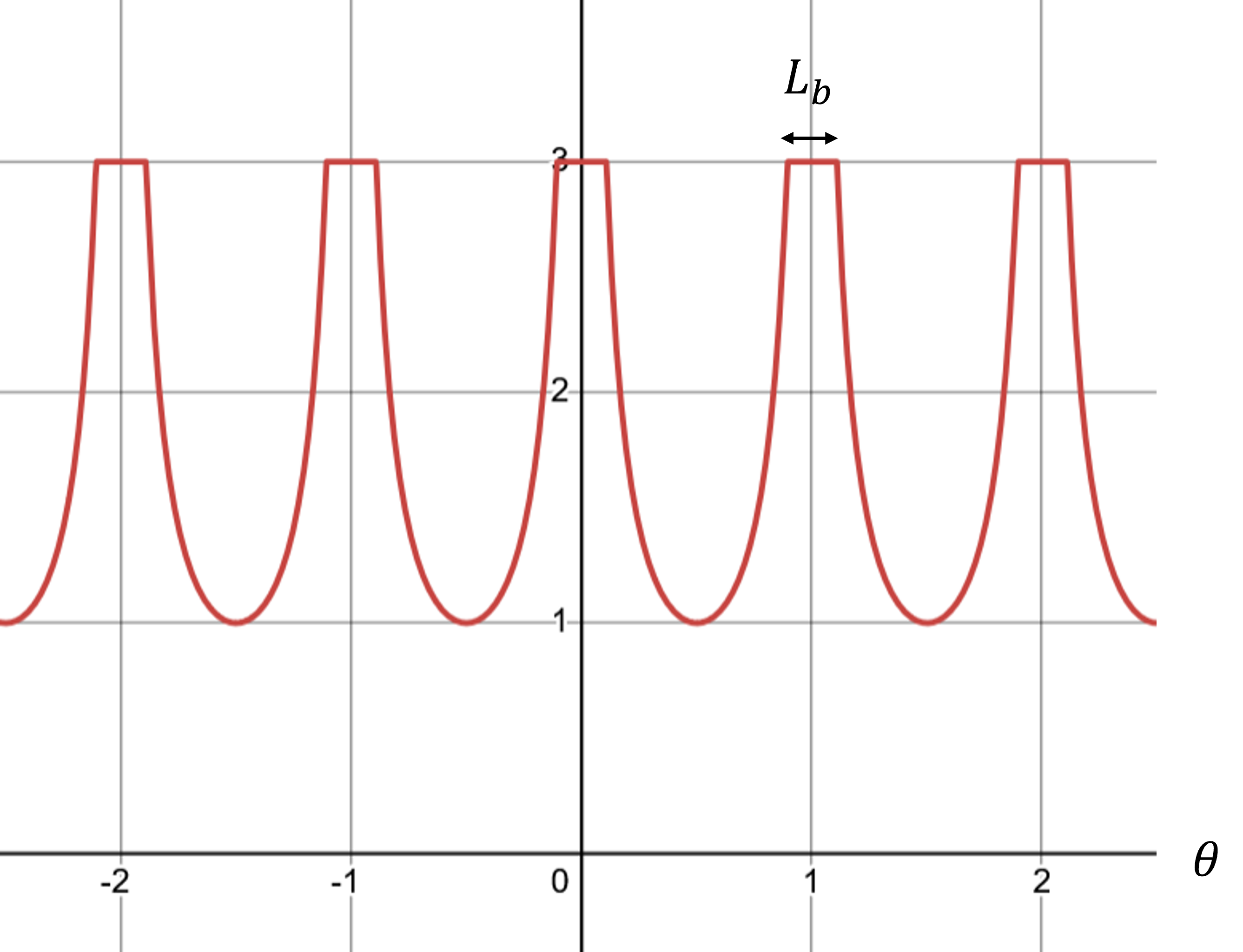}
        \caption{A plot of $f_h(\theta)$ generated by Desmos \cite{Desmos}. Here, $b=3$ and $h=0$. Increasing $b$ increases the height of the peaks and reduces $L_b$. Changing $h$ shifts the plot along the $\theta$-axis.}\label{fig:DesmosLb}
    \end{figure}
    Since
    \begin{equation*}
        \arcsin(x)=\int_0^x\frac{1}{\sqrt{1-t^2}}\mathrm{d}t>x
    \end{equation*}
    it follows that
    \begin{equation*}
        L_b>\frac{2}{\pi b},
    \end{equation*}
    which is greater than the length of $S_i$ for all $K\geq 2$. Thus, each interval $S_i$ is either:
    \begin{enumerate}[label=(\alph*)]
        \item Entirely within one of the concave up arches of $f_h(x)$,
        \item Entirely within one of the straight line segments of $f_h(x)$, or
        \item Partially within one of the arches, and partially within one of the adjacent straight line segments.
    \end{enumerate}
    In each case, the maximum of $f_h(\theta)$ occurs at an endpoint of $S_i$ as stated in \eqref{fhmaxeq}.

    Therefore, to verify \eqref{fthetabeq}, we compute an upper bound for $f(\theta)$ on each segment $S_i$ by using that
    \begin{equation}\label{fmaxeq}
        \max_{\theta\in S_i}f(\theta)\leq\sum_{0\leq h<b}\max_{\theta\in S_i}f_h(\theta),
    \end{equation}
    where the right-hand side of \eqref{fmaxeq} is evaluated using \eqref{fhmaxeq}. In Table \ref{table:bcomps} we provide, for different ranges of $b\in[b_0,b_1]$ a suitable value of $K$ such that the above computational procedure gives the inequality \eqref{fthetabeq}. This completes the proof.
\end{proof}

\def\arraystretch{1.2}
    \begin{table}[h]
    \centering
    \caption{Suitable values of $K$ to verify that \eqref{fthetabeq} holds for all bases $b$ with $b_0\leq b\leq b_1$. The time to check each range of $b$ is also included, as computed on a laptop with a 2.20 GHz processor.}
    \label{table:bcomps}
    \begin{tabular}{|c|c|c|c|} 
        \hline
        $b_0$ & $b_1$ & $K$ & Computation time (minutes) \\ [0.5ex] 
        \hline
        28500 & 31698 & 8 &  34 \\
        \hline
        26500 & 28499 & 34 & 55 \\
        \hline
        26100 & 26499 & 122 & 47 \\
        \hline
        26000 & 26099 & 367 & 35 \\
        \hline
    \end{tabular}
\end{table}

\newpage 

\printbibliography

\end{section*}

\end{document}